
\documentclass[11pt, a4paper, reqno]{amsart}
\usepackage[a4paper, margin=3.75cm]{geometry}
\usepackage{amssymb}
\usepackage{mathrsfs}

\usepackage[breaklinks]{hyperref}

\hypersetup{
unicode=true,
colorlinks=true,
linkcolor=blue,
citecolor=blue,
urlcolor=blue,
filecolor=blue,
bookmarksnumbered=true,
pdfstartview=FitH,
pdfhighlight=/N
}

\makeatletter
\renewcommand\section{\@startsection{section}{1}%
  \z@{.7\linespacing\@plus\linespacing}{.5\linespacing}%
  {\normalfont\Large\bfseries\centering}}
\makeatother

\theoremstyle{plain}
\newtheorem{theorem}{Theorem}
\newtheorem{lemma}{Lemma}

\newtheorem{corollary}{Corollary}

\theoremstyle{definition}

\newtheorem{remark}{Remark}

\numberwithin{equation}{section}

\def\ZZ{\mathbb Z}
\def\({\left(}
\def\){\right)}
\def\[{\left[}
\def\]{\right]}
\def\<{\left\langle}
\def\>{\right\rangle}

\def\mdet{\operatorname{det}}

\def\GL{\operatorname{GL}}

\def\mdeg{\operatorname{deg}}
\def\const{\mathrm{const}}

\def\NN{\mathbb N}
\def\CC{\mathbb C}
\def\QD{\mathrm{QD}}
\def\myk{\mathbf k}
\let\geq\geqslant

\hyphenation{al-go-rithm  cor-re-spond-ing poly-no-mi-al poly-no-mi-als re-cur-rence }

\begin{document}

\title{Viskovatov algorithm for Hermite--Pad\'e polynomials}

\author[Nikolay~R.~Ikonomov]{N.~R.~Ikonomov}
\address{Institute of Mathematics and Informatics, Bulgarian Academy of Sciences}
\email{nikonomov@math.bas.bg}
\author[Sergey~P.~Suetin]{S.~P.~Suetin}
\address{Steklov Mathematical Institute of the Russian Academy of Sciences}
\email{suetin@mi-ras.ru}
\thanks{The research of the second author was carried out with
partial financial support of the Russian Foundation for Basic Research (grant no.\ 18-01-00764).}

\date{July 7, 2020}

\begin{abstract}
We propose an algorithm for producing Hermite--Pad\'e polynomials of type~I
for an arbitrary tuple of $m+1$ formal power series $[f_0,\dots,f_m]$,
$m\geq1$, about $z=0$ ($f_j\in\CC[[z]]$) under the assumption that the series
have a~certain (`general position') nondegeneracy property.
This algorithm is a~straightforward extension
of the classical Viskovatov algorithm for construction of Pad\'e polynomials
(for $m=1$ our algorithm coincides with the Viskovatov algorithm).

The algorithm proposed here is based on a~recurrence relation and has the  feature
that all the Hermite--\allowbreak Pad\'e 
polynomials corresponding to the multiindices
$(k,k,k,\dots,k,k)$, $(k+1,k,k,\dots,k,k)$, $(k+1,k+1,k,\dots,k,k),\dots$,
$(k+1,k+1,k+1,\dots,k+1,k)$
are already known by the time
the algorithm produces the Hermite--Pad\'e polynomials corresponding to the multiindex
$(k+1,k+1,k+1,\dots,k+1,k+1)$.

We show how the Hermite--Pad\'e polynomials corresponding to different multiindices can be
found via this algorithm by changing appropriately the initial conditions.

The algorithm can be parallelized in $m+1$ independent evaluations at each $n$th step.

Bibliography:~\cite{Ziv16}~titles.

\bigskip
Keywords: formal power series, Hermite--Pad\'e polynomials, Viskovatov algorithm.
\end{abstract}

\maketitle

\markright{Viskovatov algorithm}

\setcounter{tocdepth}{1}
\tableofcontents


\section{The case $m=1$: a~tuple of series $[f_0,f_1]$ (finding Pad\'e polynomials)}\label{s1}

\subsection{Introduction. Classical Viskovatov algorithm}\label{s1s1}

The most well known algorithms for producing the coefficients of the expansion of a~given (in general, formal)
power series $f\in\CC[[z]]$ in a~continued $C$-fraction (this thereby also produces Pad\'e polynomials)
are the  $\QD$-algorithm
(see \cite[Ch.~4, \S\,4.3, Theorem 4.3.5]{BaGr96}) and
the Viskovatov algorithm (see \cite[\S\,4, pp.~243--245]{Vis03} and~\cite[Ch.~4, \S\,4.2, formula~(2.17)]{BaGr96}).
Both algorithms are capable of producing Pad\'e approximants of the form
$[n/n]_f,[n+1/n]_f$, but they can be applied only under certain nondegeneracy condition of the
original power series~$f$ (the series~$f$ should be in `general position').

In the present paper, we extend the classical Viskovatov algorithm
to the setting of Hermite--Pad\'e polynomials of type~I for a~tuple of $m+1$
formal power series $f_0,\dots,f_m$, where $m\geq1$ and
$f_j\in\CC[[z]]$, $j=0,\dots,m$. For $m=1$, for a~tuple of series $[f_0,f_1]$, this algorithm
(for finding Hermite--Pad\'e polynomials) produces Pad\'e polynomials.
This is why it can be looked upon as a~natural extension of the classical Viskovatov algorithm
to the case of Hermite--Pad\'e polynomials. It seems that with this algorithm one can associate the
so-called vector continued fraction (for details, see \cite[Ch.~8, \S\,8.4]{BaGr96},~\cite{MaTs17} and the references given therein).
However, this question will not be addressed in the present paper.
Recurrent relations close to those discussed below and related to the
extension of the Viskovatov algorithm to the case of Hermite--Pad\'e polynomials
were obtained in \cite{Ser86} and~\cite{MaTs17}; see also \cite{DeDi79},~\cite{BeLa92}, \cite{ShYi19} and~\cite{BeLa00}.

Note that it is the Viskovatov algorithm that ws used by A.~Trias when im\-ple\-ment\-ing
the HELM-algorithm for producing Pad\'e polynomials (see \cite{Tri17},~\cite{Tri17b}, and also~\cite{Tri18}).

Let $a(z):=\sum\limits_{k=0}^\infty a_kz^k$,
$b(z):=\sum\limits_{k=0}^\infty b_kz^k$ be formal power series,  $b_0\neq0$.
The Viskovatov algorithm of expansion of the ratio of the series
$a(z)/b(z)$ in a~continued $C$-fraction
is based on the following identity (see \cite[\S\,4.2, formula~(2.17)]{BaGr96}):
\begin{equation}
\frac{\sum\limits_{k=0}^\infty a_kz^k}{\sum\limits_{k=0}^\infty b_kz^k}
=\frac{a_0}{b_0}+\frac{z}{\sum\limits_{k=0}^\infty b_kz^k\bigm/
\sum\limits_{k=0}^\infty(a_{k+1}-a_0b_{k+1}/b_0)z^k}.
\label{1}
\end{equation}
An application of a~similar identity to the series appearing in the denominator on the right of~\eqref{1}
constitutes the next step in the expansion of the ratio of the series $a(z)/b(z)$
in a~continued $C$-fraction
\begin{equation}
v_0+\cfrac{z}{v_1+{\cfrac{z}{v_2+\cfrac{z}{v_3+\dots}}}}.
\label{2}
\end{equation}
In the Pad\'e table for the series $f(z):=a(z)/b(z)$,
the approximants of the continued
$C$-fraction~\eqref{2} form a~staircase sequence consisting of
Pad\'e approximants  of the form $[n/n]_f$ and $[n+1/n]_f$, $n=0,1,\dots$.

Note that for formal Laurent series about the point at infinity~$\zeta$ of the form
\begin{equation}
F(\zeta):=\sum_{k=0}^\infty\frac{c_k}{\zeta^{k+1}},
\label{3}
\end{equation}
expansions in a~Chebyshev continued fraction are constructed using the
Jacobi--\allowbreak Perron algorithm. There exists a~multivariate analogue of the
Jacobi--\allowbreak Perron algorithm capable of producing Hermite--Pad\'e polynomials for a~tuple of
series~\eqref{3}
(see \cite{NiSo88},~\cite{Par81} and the references cited there). Such algorithms also apply
for producing Hermite--Pad\'e polynomials under certain nondegeneracy conditions.
One can also mention $\QD$-algorithms (see, first of all~\cite{Ise87} and~\cite{Ise89} and the bibliography
given there)
capable of producing Hermite--Pad\'e polynomials for a~tuple of Laurent series~\eqref{3}
under similar nondegeneracy conditions.
However, these $\QD$-algorithms for construction of Hermite--Pad\'e polynomials
pertain to formal Laurent series rather than to formal Taylor series, and, as far as we know,
no link between formal Taylor series and the classical $\QD$-algorithms has been established so far.

It is also worth noting that from the point of view of  applications,
the traditional interest to Hermite--Pad\'e polynomials of type~I stems mainly from the
fact that they underlie the construction of Shafer quadratic approximants (also called algebraic approximants);
see \cite{Ser86}, \cite{SaToSu91}, \cite{SeGo98},
\cite{FeHo04},~\cite{KoKrPaSu16},~\cite{Ziv16}, \cite{AmBoFe18}, \cite{FaHaSpWe19} and the references there.
Nevertheless, a~new approach to the analytic continuation problem was recently proposed in
\cite[\S\,4, formulae (61)--(63)]{Sue18d} (see also \cite{Kom20}); this approach is based on the use of
Hermite--Pad\'e polynomials of type~I, but in it only rational functions are involved.
In~\cite{Kom20}, this approach was theoretically justified for a~sufficiently large class of
multivalued analytic functions; for more details, see \cite[formula~(9), Corollary~6]{Kom20}.

\subsection{Hermite--Pad\'e polynomials (Pad\'e polynomials) for a~tuple of series
$[f_0,f_1]$}\label{s1s2}
Here we reduce the Viskovatov algorithm to the form more convenient for further applications.
We set $f_0=f_0(z)=\sum\limits_{k=0}^\infty c_{0,k}z^k=c_0+O(z)$,
$f_1=f_1(z)=\sum\limits_{k=0}^\infty c_{1,k}z^k=c_1+O(z)$; here and in what follows,
by $O(z)$ we denote power series starting with the first-order term in~$z$.
Hence relation~\eqref{1} can be written as the identity
\begin{equation}
\frac{f_1}{f_0}=\frac{c_1}{c_0}+\cfrac{z}{f_0\bigm/\biggl[\biggl(f_1-\cfrac{c_1}{c_0}f_0
\biggr)/z\biggr]}
=\frac{c_1}{c_0}+\cfrac{z}{f^{[1]}_1/f^{[1]}_0},
\label{4}
\end{equation}
where we put
\begin{equation}
f^{[1]}_1:=f_0=:\sum_{k=0}^\infty c_{1,k}^{[1]}=c_1^{[1]}+O(z),\ \ 
f_0^{[1]}:=\frac1z\(f_1-\frac{c_1}{c_0}f_0\)
=\sum_{k=0}^\infty c_{0,k}^{[1]}=c_0^{[1]}+O(z).
\notag
\end{equation}
Similarly to \eqref{4}, for the new series $f_0^{[1]}$ and $f_1^{[1]}$, we set
\begin{equation}
\frac{f_1^{[1]}}{f_0^{[1]}}=\frac{c_1^{[1]}}{c_0^{[1]}}
+\cfrac{z}{f^{[2]}_1/f^{[2]}_0},
\label{5}
\end{equation}
where
\begin{align}
f_1^{[2]}:&=f_0^{[1]}=:\sum_{k=0}^\infty c_{1,k}^{[2]}z^k=c_1^{[2]}+O(z),
\notag\\
f_0^{[2]}:&=\frac1z\(f_1^{[1]}-\frac{c_1^{[1]}}{c_0^{[1]}}f_0^{[1]}\)
=\sum_{k=0}^\infty c_{0,k}^{[2]}z^k=c_0^{[2]}+O(z).
\notag
\end{align}
From \eqref{3} and~\eqref{5} we get the following relation, in which we put
$f_0^{[0]}:=f_0=:\sum\limits_{k=0}^\infty c_{0,k}^{[0]}z^k=c_0^{[0]}+O(z)$,
$f_1^{[0]}:=f_1=:\sum\limits_{k=0}^\infty c_{1,k}^{[0]}z^k=c_1^{[0]}+O(z)$,
\begin{equation}
\frac{f_1}{f_0}=\frac{f^{[0]}_1}{f^{[0]}_0}=\frac{c^{[0]}_1}{c^{[0]}_0}
+\cfrac{z}{\cfrac{c^{[1]}_1}{c^{[1]}_0}+\cfrac{z}{f^{[2]}_1/f^{[2]}_0}}.
\label{6}
\end{equation}
Applying formula of the form \eqref{5} to  $f^{[2]}_1/f^{[2]}_0,f^{[3]}_1/f^{[3]}_0,\dots$,
we get the (formal) expansion  of the ratio $f_1/f_0$ in a~continued  $C$-fraction
of the form~\eqref{2}.

\subsection{The matrix approach}\label{s1s3}
Let $f^{[0]}_0:=f_0,f^{[0]}_1:=f_1,f^{[1]}_0,f^{[1]}_1,\dots$ be formal series (in the above sense).

Setting
$$
M_1:=\begin{pmatrix}0&1\\1&0\end{pmatrix},\quad
M_2:=\begin{pmatrix}z&0\\-c^{[0]}_1/c^{[0]}_0&1\end{pmatrix},
\quad \vec{f}:=(f_1,f_0),
$$
we have
$$
M_1\binom{f_1}{f_0}=\binom{f_0}{f_1},\quad
M_2\binom{f_0}{f_1}=
\begin{pmatrix}zf_0\\f_1-\cfrac{c_1}{c_0}f_0\end{pmatrix}
=z
\begin{pmatrix}f_1^{[1]}\\f_0^{[1]}\end{pmatrix}.
$$
We define
\begin{equation}
M^{[0]}:=M_2M_1=
\begin{pmatrix}0&z\\1&-c^{[0]}_1/c^{[0]}_0\end{pmatrix}=
\begin{pmatrix}P_1&P_2\\Q_1&Q_2\end{pmatrix},
\label{6.2}
\end{equation}
where $P_1,P_2,Q_1,Q_2$ are polynomials of~$z$, $P_j.Q_j\in\CC[z]$.
So, we have
\begin{equation}
M^{[0]}\binom{f_1}{f_0}=z
\begin{pmatrix}f_1^{[1]}\\f_0^{[1]}\end{pmatrix}=O(z).
\label{7}
\end{equation}
Now from \eqref{7} we get that $Q_1f_1+Q_2f_0=O(z)$, where $Q_1=\const$, $
Q_2=\const$. Hence $Q_2$ and $Q_1$ are Hermite--Pad\'e polynomials of type~I (Pad\'e polynomials)
for the tuple of series [$f_0,f_1]$ and the multiindex  $\mathbf k=(0,0)$, because
the order of tangency in~\eqref{7} agrees with the multiindex~$\mathbf k$: $|\mathbf
k|+1=0+1=1$.

As in \eqref{7}, we have
$$
M^{[1]}
\begin{pmatrix} f_1^{[1]}\\f_0^{[1]}\end{pmatrix}
=z
\begin{pmatrix} f_1^{[2]}\\f_0^{[2]}\end{pmatrix},
\quad\text{where}\quad
M^{[1]}=
\begin{pmatrix}0&z\\1&-c_1^{[1]}/c_0^{[1]}\end{pmatrix}.
$$
Therefore,
\begin{equation}
M^{[1]}M^{[0]}\begin{pmatrix} f_1\\f_0\end{pmatrix}
=z^2
\begin{pmatrix} f_1^{[2]}\\f_0^{[2]}\end{pmatrix}=O(z^2).
\label{8}
\end{equation}
We set $A^{[0]}:=M^{[0]}$, $A^{[1]}:=M^{[1]}M^{[0]}$. Hence
\begin{equation}
A^{[1]}=M^{[1]}M^{[0]}
=
\begin{pmatrix}
z&-\dfrac{c_1}{c_0}z\\
-\dfrac{c_1^{[1]}}{c_0^{[1]}}&z+\cfrac{c_1^{[1]}}{c_0^{[1]}}
\dfrac{c_1}{c_0}
\end{pmatrix}
=:\begin{pmatrix}
P_1^{[1]}&P_2^{[1]}\\Q_1^{[1]}&Q_2^{[1]}
\end{pmatrix},
\label{8.2}
\end{equation}
where $\mdeg{Q_1^{[1]}}=0$, $\mdeg{Q_2^{[1]}}=1$. From \eqref{8}
it follows that $Q_2^{[1]}f_0+Q_1^{[1]}f_1=O(z^2)$. Therefore,
$Q_2^{[1]},Q_1^{[1]}$ is a~pair of Hermite--Pad\'e polynomials (Pad\'e polynomials) for the
tuple of series $[f_0,f_1]$
and the multiindex $\mathbf k=\mathbf k^{[1]}=(1,0)$.
Proceeding as in~\eqref{7} and~\eqref{8}, we arrive in succession to
Hermite--Pad\'e polynomials for the tuple  $[f_0,f_1]$ and the multiindices
$\mathbf k^{[2]}=(1,1)$, $\myk^{[3]}=(2,1)$, $\myk^{[4]}=(2,2)$, $\myk^{[5]}=(3,2),\dots$.

\subsection{Hermite--Pad\'e polynomials for a~tuple of
series $[f_0,f_1]$: an arbitrary iteration step $(n+1)$}\label{s1s4}
Let $n\geq1$. We set $$
f_1^{[n+1]}:=f_0^{[n]}=:c_1^{[n+1]}+O(z),\quad
f_0^{[n+1]}:=\frac1z\(f_1^{[n]}-\frac{c_1^{[n]}}{c_0^{[n]}}f_0^{[n]}\)
=:c_0^{[n+1]}+O(z).
$$
Let
$$
M^{[n]}:=
\begin{pmatrix}0&z\\1&-{c_1^{[n]}}/{c_0^{[n]}}\end{pmatrix},\quad
A^{[n]}:=M^{[n]}\cdots M^{[0]}=
\begin{pmatrix}P_1^{[n]}&P_2^{[n]}\\Q_1^{[n]}&Q_2^{[n]}\end{pmatrix},
$$
where $P_j^{[n]},Q_j^{[n]}$ are polynomials. Hence
\begin{equation}
A^{[n+1]}:=M^{[n+1]}A^{[n]}=
\begin{pmatrix}P_1^{[n+1]}&P_2^{[n+1]}\\Q_1^{[n+1]}&Q_2^{[n+1]}\end{pmatrix},
\label{10}
\end{equation}
where $P_j^{[n+1]},Q_j^{[n+1]}\in\CC[z]$. By definition of the matrix $M^{[n]}$
and using~\eqref{10}, we have the recurrence relations
\begin{equation}
\begin{aligned}
P_j^{[n+1]}&=zQ_j^{[n]},\\
Q_j^{[n+1]}&=P_j^{[n]}-\frac{c_1^{[n+1]}}{c_0^{[n+1]}}Q_j^{[n]},\quad
j=1,2,\dots,\quad n=1,2,\dots.
\end{aligned}
\label{11}
\end{equation}
From \eqref{11} we get the  {\it three-term} recurrence relation, which relates the
polynomials  $Q_j^{[n+1]}$, $j=1,2$, as obtained at step $(n+1)$ of the iteration,
to the polynomials obtained at two previous steps:
\begin{equation}
Q_j^{[n+1]}(z)=-\frac{c_1^{[n+1]}}{c_0^{[n+1]}}Q_j^{[n]}(z)+zQ_j^{[n-1]}(z),
\quad j=1,2,\quad n=1,2,\dotsc.
\label{12}
\end{equation}
The initial conditions for relations \eqref{12} follow from \eqref{6.2} and~\eqref{8.2}:
\begin{equation}
Q_1^{[0]}=Q_1=1,\quad Q_2^{[0]}=Q_2=-\frac{c_1}{c_0},\qquad
Q_1^{[1]}=-\frac{c_1^{[1]}}{c_0^{[1]}},
\quad Q_2^{[1]}=z+\frac{c_1^{[1]}}{c_0^{[1]}}\frac{c_1}{c_0}.
\label{12.2}
\end{equation}
Moreover, we have
\begin{equation}
A^{[n]}
\begin{pmatrix}f_1\\f_0\end{pmatrix}
=z^{n+1}\begin{pmatrix}f_1^{[n+1]}\\f_0^{[n+1]}\end{pmatrix}
=O(z^{n+1}).
\label{13}
\end{equation}

The following result holds.

\begin{theorem}\label{th1}
1) Let $n=2k$, $k\geq0$. Then $\mdeg Q_1^{[n]}=\mdeg Q_2^{[n]}=k$ and the polynomials
$Q_2^{[n]},Q_1^{[n]}$ are Hermite--Pad\'e polynomials for the tuple of series $[f_0,f_1]$
and the multiindex $\myk^{[n]}=(k,k)$ (the order of tangency is $|\myk^{[n]}|+1=2k+1=n+1$, see \eqref{13}).

2) Let $n=2k+1$, $k\geq0$. Then $\mdeg Q_1^{[n]}=k$, $\mdeg Q_2^{[n]}=k+1$ and the polynomials
$Q_2^{[n]},Q_1^{[n]}$ are Hermite--Pad\'e polynomials for the tuple of series $[f_0,f_1]$
and the multiindex $\myk^{[n]}=(k+1,k)$ (the order of tangency is
$|\myk^{[n]}|+1=2k+2=n+1$, see \eqref{13}).
\end{theorem}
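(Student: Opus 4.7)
The plan is to split the theorem into a tangency statement and a degree statement, proving each by induction on $n$ from the matrix identity \eqref{13} and the three-term recurrence \eqref{12} respectively.

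For the tangency I would first verify \eqref{13} itself by induction on $n$. The base $n=0$ is precisely \eqref{7}; for the inductive step I left-multiply \eqref{13} by $M^{[n+1]}$ and use the defining relations $f_1^{[n+2]}=f_0^{[n+1]}$ and $f_0^{[n+2]}=z^{-1}(f_1^{[n+1]}-(c_1^{[n+1]}/c_0^{[n+1]})f_0^{[n+1]})$ to recast the right-hand side as $z^{n+2}\binom{f_1^{[n+2]}}{f_0^{[n+2]}}$. Reading off the second row of \eqref{13} then gives
$$
Q_1^{[n]}f_1+Q_2^{[n]}f_0\;=\;z^{n+1}f_0^{[n+1]}\;=\;c_0^{[n+1]}z^{n+1}+O(z^{n+2}),
$$
so the order of tangency is exactly $n+1=|\mathbf k^{[n]}|+1$ in both parity cases; the general-position assumption $c_0^{[n+1]}\neq 0$ (needed anyway for the algorithm to be defined) is what prevents the tangency from being strictly higher.

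Next I would determine the degrees of $Q_1^{[n]}$ and $Q_2^{[n]}$ by induction from \eqref{12} with base cases \eqref{12.2}. Two of the four degree assertions per step are immediate because in the recurrence $Q_j^{[n+1]}=-(c_1^{[n+1]}/c_0^{[n+1]})Q_j^{[n]}+zQ_j^{[n-1]}$ the summand $zQ_j^{[n-1]}$ strictly dominates: the inductive hypothesis gives $\mdeg zQ_1^{[2k-2]}=k>k-1=\mdeg Q_1^{[2k-1]}$, forcing $\mdeg Q_1^{[2k]}=k$ with no possible cancellation, and symmetrically $\mdeg Q_2^{[2k+1]}=k+1$ from $\mdeg zQ_2^{[2k-1]}=k+1>k=\mdeg Q_2^{[2k]}$.

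The main obstacle is the remaining pair of cases, $\mdeg Q_2^{[2k]}=k$ and $\mdeg Q_1^{[2k+1]}=k$, in which the two summands in \eqref{12} happen to have the \emph{same} degree, so that exactness of the claimed degree is equivalent to the non-cancellation of two leading coefficients. I would dispose of both by combining the tangency result above with the uniqueness, up to a nonzero scalar, of type-I Hermite--Pad\'e polynomials along the staircase under general position: the pair $(Q_2^{[n]},Q_1^{[n]})$ has the correct tangency order $n+1$ and the a~priori degree bounds coming from \eqref{12}, so under general position it must coincide up to a scalar with the unique Pad\'e pair for the multiindex $\mathbf k^{[n]}$, whose denominator and numerator have exactly the degrees stated in the theorem. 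A fully self-contained alternative is to track the leading coefficients of $Q_j^{[n]}$ directly through \eqref{12} and recognise the required non-vanishing as the Hankel-determinant form of the nondegeneracy hypothesis built into the general-position assumption.
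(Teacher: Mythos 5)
Your proposal is correct and follows the same basic line as the paper: induction on the three-term recurrence \eqref{12} with the initial data \eqref{12.2}, the tangency statement being read off from the matrix identity \eqref{13}. The real difference is in how the one genuine difficulty is handled. The paper's proof writes $\mdeg Q_j^{[n+1]}=\max\{\mdeg Q_j^{[n]},1+\mdeg Q_j^{[n-1]}\}$ in all four cases, including the two tie cases you single out, and never argues the non-cancellation of leading coefficients: by Remark~\ref{rem0} the ``general position'' hypothesis is simply taken to \emph{mean} that all degrees are exactly as stated, and the paper concedes that otherwise only nonstrict inequalities hold. You correctly isolate exactly the two steps where the summands of \eqref{12} have equal degree and propose to close them either by uniqueness (up to a scalar) of the type-I pair for a normal multiindex, or by tracking leading coefficients down to a Hankel-determinant nonvanishing condition; either route turns the blanket assumption into an actual argument, which is more than the paper does. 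Your observation that $Q_1^{[n]}f_1+Q_2^{[n]}f_0=z^{n+1}f_0^{[n+1]}=c_0^{[n+1]}z^{n+1}+O(z^{n+2})$, so the tangency is \emph{exactly} $n+1$, is likewise not in the paper (which only needs the $O(z^{n+1})$ estimate) and is harmless. Two cautions if you carry this out: for the uniqueness route you must first obtain the a priori bounds $\mdeg Q_1^{[n]}\le k_1$ and $\mdeg Q_2^{[n]}\le k_0$ for the multiindex $\myk^{[n]}$ (these do follow from the same induction with $\le$ in place of $=$), and you should state explicitly which normality or Hankel conditions on $[f_0,f_1]$ you take ``general position'' to mean, since the paper leaves that notion unformalized.
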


\begin{remark}\label{rem0}
Here and in what follows, in Theorems \ref{th1}--\ref{th3} and
the corresponding algorithms, we assume that the original series $f_0,f_1,\dots,f_m$ are in `general position'
(cf.~\cite{Der94}). In particular, we assume that all the polynomials appearing in the statements of Theorems \ref{th1}--\ref{th3}
have the precise degree indicated in the theorems. In general, in the corresponding relations for the degree
the nonstrict inequality should be written.
\end{remark}

\begin{remark}\label{rem1}
If we start the iteration process from the vector series ${\,}^{\mathrm T\!}\!(f_0,f_1)$ (here  and in what follows
${\,}^{\mathrm T\!}\!\vec v$ denotes the transposition with respect to the
vector row
$\vec{v}$) instead of ${\,}^{\mathrm T\!}\!(f_1,f_0)$, then as a~result we get the sequence of Pad\'e approximants 
of the form $[n/n],[n/n+1]$, $n=0,1,\dotsc$ (instead of $[n/n],[n+1/n]$).
\end{remark}

Theorem \ref{th1} is in fact well known (see \cite[Ch.~4, Theorem  4.2.1]{BaGr96}; cf.\ also \eqref{12} and
\cite[formula~(2.76)]{BaGr96}). Nevertheless, to make our presentation complete, we give a~proof of this theorem.

\begin{proof}[Proof of Theorem~\ref{th1}] We argue by induction on~$k$.

1) Let $k=0$. From \eqref{12.2}, for $n=2k=0$ we get
$\mdeg{Q_1^{[0]}}=\mdeg Q_2^{[0]}=\nobreak 0$. Moreover, from \eqref{12} and~\eqref{12.2}
for $n=2k+1=1$ we have $\mdeg Q_1^{[1]}=0$, $\mdeg Q_2^{[1]}=\nobreak 1$. For $k=1$, using \eqref{12}
we find that  $\mdeg Q_1^{[2]}=\mdeg Q_2^{[2]}=1$ and $\mdeg
Q_1^{[3]}=1$, $\mdeg Q_2^{[3]}=2$.

2) Assuming now that the conclusions of Theorem~\ref{th1} hold for $n=2k-1$ and $n=2k$, let us
show that they also hold for $n=2k+1$ and $n=2k+2$.

For $n=2k+1$, from the recurrence relation \eqref{12} we get
$$Q_1^{[2k+1]}=a^{[2k+1]}Q_1^{[2k]}+zQ_1^{[2k-1]}.$$
Therefore,
$$
\mdeg Q_1^{[2k+1]}=\max\{\mdeg Q_1^{[2k]},1+\mdeg Q_1^{[2k-1]}\}
=\max\{k,1+k-1\}=k.
$$
A similar analysis shows that $Q_2^{[2k+1]}=a^{[2k+1]}Q_2^{[2k]}+zQ_2^{[2k+1]}$, and therefore,
using the induction assumption,
$$
\mdeg Q_2^{[2k+1]}=\max\{\mdeg Q_2^{[2k]},1+\mdeg Q_2^{[2k-1]}\}
=\max\{k,1+k\}=k+1.
$$

For $n=2k+2$ from \eqref{12} we conclude that
$Q_1^{[2k+2]}=a^{[2k+2]}Q_1^{[2k+1]}+zQ_1^{[2k]}$, and hence, by the induction assumption,
$$
\mdeg Q_1^{[2k+2]}=\max\{\mdeg Q_1^{[2k+1]},1+\mdeg Q_1^{[2k]}\}
=\max\{k,1+k\}=k+1.
$$
In a similar manner we obtain
$Q_2^{[2k+2]}=a^{[2k+2]}Q_2^{[2k+1]}+zQ_2^{[2k]}$, and therefore, by the induction assumption,
$$
\mdeg Q_2^{[2k+2]}=\max\{\mdeg Q_2^{[2k+1]},1+\mdeg Q_2^{[2k]}\}
=\max\{k+1,1+k\}=k+1.
$$

Theorem \ref{th1} is proved.
\end{proof}

\subsection{The algorithm for $m=1$ (a~tuple of series $[f_0,f_1]$)}\label{s1s5}
We are given two series
$f_0=f_0(z)=\sum\limits_{k=0}^\infty c_{0,k}z^k=c_0+O(z)$ and
$f_1=f_1(z)=\sum\limits_{k=0}^\infty c_{1,k}z^k=c_1+O(z)$.

\subsubsection{\bf The initial iteration step}\label{s1s5s1}
If $f_0^{[0]}:=f_0$, $f_1^{[0]}:=f_1$, $c_0^{[0]}:=c_0$, $c_1^{[0]}:=c_1$, then we have
$c_{0,k}^{[0]}:=c_{0,k}$, $c_{1,k}^{[0]}:=c_{1,k}$ for $k=0,1,\dots$, and $f_0^{[0]}=c_0^{[0]}+O(z)$, $f_1^{[0]}=c_1^{[0]}+O(z)$.
Let $a^{[0]}:=-c_1^{[0]}/c_0^{[0]}$.

\subsubsection{\bf Step $1$ of the iteration}\label{s1s5s2}
Setting
\begin{equation}
f_1^{[1]}:=f_0^{[0]},\quad f_0^{[1]}:=\frac1z\(f_1^{[0]}-\frac{c_1^{[0]}}{c_0^{[0]}}
f_0^{[0]}\),
\label{14}
\end{equation}
we have
$$
f_0^{[1]}=\sum_{k=0}^\infty c_{0,k}^{[1]}z^k=c_0^{[1]}+O(z),\quad
f_1^{[1]}=\sum_{k=0}^\infty c_{1,k}^{[1]}z^k=c_1^{[1]}+O(z).
$$
We set $a^{[1]}:=-c_1^{[1]}/c_0^{[1]}$,
$$
Q_1^{[0]}:=1,\quad Q_2^{[0]}:=a^{[0]},\quad
Q_1^{[1]}:=a^{[1]},\quad Q_2^{[1]}:=z+a^{[1]}a^{[0]}.
$$

\subsubsection{\bf Step $(n+1)$ of the iteration}\label{s1s5s3}
Let $a^{[n]}:=-c_1^{[n]}/c_0^{[n]}$,
\begin{equation}
\begin{aligned}
f_1^{[n+1]}:&=f_0^{[n]}=:\sum_{k=0}^\infty c_{1,k}^{[n+1]}z^k=c_1^{[n+1]}+O(z),\\
f_0^{[n+1]}:&=\frac1z\(f_1^{[n]}-\frac{c_1^{[n]}}{c_0^{[n]}}f_0^{[n]}\)
=\frac1z\(f_1^{[n]}+a^{[n]}f_0^{[n]}\)\\
&=\sum_{k=0}^\infty c_{0,k}^{[n+1]}z^k=c_0^{[n+1]}+O(z).
\end{aligned}
\label{15}
\end{equation}
Setting $a^{[n+1]}:=-c_1^{[n+1]}/c_0^{[n+1]}$, we find that  (see \eqref{12})
\begin{equation}
Q_j^{[n+1]}(z)=a^{[n+1]}Q_j^{[n]}(z)+zQ_j^{[n-1]}(z),
\quad j=1,2,\quad n=1,2,\dotsc.
\notag
\end{equation}

\section{The case $m=2$: a~tuple of series $[f_0,f_1,f_2]$}\label{s2}

\subsection{Introduction}\label{s2s1}
We are given three formal power series
$$ f_0=f_0(z)=\sum\limits_{k=0}^\infty c_{0,k}z^k ,\ 
f_1=f_1(z)=\sum\limits_{k=0}^\infty c_{1,k}z^k ,\ 
f_2=f_2(z)=\sum\limits_{k=0}^\infty c_{2,k}z^k .$$
If we define $f_0=c_0+O(z)$, $f_1=c_1+O(z)$, $f_2=c_2+O(z)$, then we get
\begin{align}
\begin{pmatrix}0&0&1\\1&0&0\\0&1&0\end{pmatrix}
\begin{pmatrix}f_2\\f_1\\f_0\end{pmatrix}
&=\begin{pmatrix}f_0\\f_2\\f_1\end{pmatrix},\label{20.2}\\
\begin{pmatrix}z&0&0\\0&1&-\dfrac{c_2}{c_1}\\-\dfrac{c_1}{c_0}&0&1\end{pmatrix}
\begin{pmatrix}f_0\\f_2\\f_1\end{pmatrix}
&=
\begin{pmatrix}zf_0\\f_2-\dfrac{c_2}{c_1}f_1\\f_1-\dfrac{c_1}{c_0}f_0
\end{pmatrix}
=z
\begin{pmatrix}f^{[1]}_2\\f^{[1]}_1\\f^{[1]}_0\end{pmatrix},
\label{21}
\end{align}
where
$$
f_2^{[1]}:=f_0,\quad f_1^{[1]}:=\frac1z\(f_2-\frac{c_2}{c_1}f_1\),\quad
f_0^{[1]}:=\frac1z\(f_1-\frac{c_1}{c_0}f_0\).
$$
We have
\begin{align}
f_0^{[1]}&=\sum_{k=0}^\infty c_{0,k}^{[1]}z^k=c_0^{[1]}+O(z),\notag\\
f_1^{[1]}&=\sum_{k=0}^\infty c_{1,k}^{[1]}z^k=c_1^{[1]}+O(z),\notag\\
f_2^{[1]}&=\sum_{k=0}^\infty c_{2,k}^{[1]}z^k=c_2^{[1]}+O(z).\notag
\end{align}
Let
\begin{gather}
M_1:=\begin{pmatrix}0&0&1\\1&0&0\\0&1&0\end{pmatrix},\quad
M_2:=
\begin{pmatrix}
z&0&0\\0&1&-c_2/c_1\\-c_1/c_0&0&1\end{pmatrix},
\notag\\
M^{[0]}:=M_2M_1=
\begin{pmatrix}0&0&z\\1&-c_2/c_1&0\\0&1&-c_1/c_0\end{pmatrix}
=\begin{pmatrix}P_1&P_2&P_3\\Q_1&Q_2&Q_3\\R_1&R_2&R_3\end{pmatrix}
\label{22}
\end{gather}
Hence relations \eqref{2} assume the form
\begin{equation}
M^{[0]}{\,}^{\mathrm T\!}\!\vec{f}=z{\,}^{\mathrm T\!}\!\vec{f}^{\,[1]},
\label{23}
\end{equation}
where $\vec{f}:=(f_2,f_1,f_0)$, $\vec{f}^{\,[1]}:=(f^{[1]}_2,f^{[1]}_1,f^{[1]}_0)$,
and ${\,}^{\mathrm T}\!\vec{v}$ is the transposition with respect to the
vector row   $\vec{v}=(v_1,v_2,v_3)$.

We set $f^{[0]}_0:=f_0=\sum\limits_{k=0}^\infty c_{0,k}^{[0]}z^k=c_0^{[0]}+O(z)$,
$f^{[0]}_1:=f_1=\sum\limits_{k=0}^\infty c_{1,k}^{[0]}z^k=c_1^{[0]}+O(z)$,
$f^{[0]}_2:=f_2=\sum\limits_{k=0}^\infty c_{2,k}^{[0]}z^k=c_2^{[0]}+O(z)$,
$a^{[0]}:=-c_2^{[0]}/c_1^{[0]}$, $b^{[0]}:=-c_1^{[0]}/c_0^{[0]}$.

\subsection{Theoretical results}\label{s2s2}
Given $n\geq1$, we define
\begin{equation}
\begin{aligned}
f_2^{[n+2]}:&=f_0^{[n]}=:\sum_{k=0}^\infty c_{2,k}^{[n+1]}z^k
=c_2^{[n+1]}+O(z),\\
f_1^{[n+1]}:&=\frac1z\(f_2^{[n]}-\frac{c_2^{[n]}}{c_1^{[n]}}f_1^{[n]}\)
=:\sum_{k=0}^\infty c_{1,k}z^k=c_1^{[n+1]}+O(z),\\
f_0^{[n+1]}:&=\frac1z\(f_1^{[n]}-\frac{c_1^{[n]}}{c_0^{[n]}}f_0^{[n]}\)
=:\sum_{k=0}^\infty c_{0,k}z^k=c_0^{[n+1]}+O(z).
\end{aligned}
\label{24}
\end{equation}
As in \eqref{22}, we put
\begin{equation}
M^{[n]}:=
\begin{pmatrix}
0&0&z\\1&-c_2^{[n]}/c_1^{[n]}&0\\0&1&-c_1^{[n]}/c_0^{[n]}
\end{pmatrix}.
\label{25}
\end{equation}
Let
$$
A^{[n]}:=M^{[n]}\cdots M^{[0]}=
\begin{pmatrix}
P_1^{[n]}&P_2^{[n]}&P_3^{[n]}\\
Q_1^{[n]}&Q_2^{[n]}&Q_3^{[n]}\\
R_1^{[n]}&R_2^{[n]}&R_3^{[n]}
\end{pmatrix}.
$$
Then
\begin{align}
A^{[n+1]}:&=
\begin{pmatrix}
P_1^{[n+1]}&P_2^{[n+1]}&P_3^{[n+1]}\\
Q_1^{[n+1]}&Q_2^{[n+1]}&Q_3^{[n+1]}\\
R_1^{[n+1]}&R_2^{[n+1]}&R_3^{[n+1]}
\end{pmatrix}=M^{[n+1]}A^{[n]}\notag\\
&=
\begin{pmatrix}
0&0&z\\1&-c_2^{[n+1]}/c_1^{[n+1]}&0\\0&1&-c_1^{[n+1]}/c_0^{[n+1]}
\end{pmatrix}
\begin{pmatrix}
P_1^{[n]}&P_2^{[n]}&P_3^{[n]}\\
Q_1^{[n]}&Q_2^{[n]}&Q_3^{[n]}\\
R_1^{[n]}&R_2^{[n]}&R_3^{[n]}.
\end{pmatrix}.
\label{26}
\end{align}
From \eqref{26} we get the  recurrence relations for $j=1,2,3$ and
for $n\geq1$
\begin{equation}
\begin{aligned}
P_j^{[n+1]}&=zR_j^{[n]},\\
Q_j^{[n+1]}&
=P_j^{[n]}-\frac{c_2^{[n+1]}}{c_1^{[n+1]}}Q_j^{[n]}
=a^{[n+1]}Q_j^{[n]}+P_j^{[n]},\\
R_j^{[n+1]}&=Q_j^{[n]}-\frac{c_1^{[n+1]}}{c_0^{[n+1]}}R_j^{[n]}
=b^{[n+1]}R_j^{[n]}+Q_j^{[n]},
\end{aligned}
\label{27}
\end{equation}
where
$$
a^{[n+1]}:=-\frac{c_2^{[n+1]}}{c_1^{[n+1]}},\quad
b^{[n+1]}:=-\frac{c_1^{[n+1]}}{c_0^{[n+1]}}.
$$
From \eqref{27} we finally get
\begin{equation}
\begin{aligned}
Q_j^{[n+1]}(z)&=a^{[n+1]}Q_j^{[n]}(z)+zR^{[n-1]}(z),\\
R_j^{[n+1]}(z)&=b^{[n+1]}R_j^{[n]}(z)+Q_j^{[n]}
\end{aligned}
\label{28}
\end{equation}
for $j=1,2,3$ and for $n=1,2,\dots$. Moreover, the initial conditions for the three-term
recurrence relations \eqref{28} follow from the equality $A^{[1]}=M^{[1]}M^{[0]}$ and read as
\begin{equation}
\begin{gathered}
Q_1^{[0]}=1, \quad Q_2^{[0]}=a^{[0]},\quad Q_3^{[0]}=0,\\
R_1^{[0]}=0,\quad R_2^{[0]}=1,\quad R_3^{[0]}=b^{[0]},\\
Q_1^{[1]}=a^{[1]},\quad Q_2^{[1]}=a^{[1]}a^{[0]},\quad Q_3^{[1]}=z,\\
R_1^{[1]}=1,\quad R_2^{[1]}=b^{[1]}+a^{[0]},\quad R_3^{[1]}=b^{[1]}b^{[0]}.
\end{gathered}
\label{28.2}
\end{equation}
Furthermore, from \eqref{25} we have
\begin{equation}
A^{[n]}
\begin{pmatrix}f_2\\f_1\\f_0\end{pmatrix}
=M^{[n]}\cdots M^{[0]}
\begin{pmatrix}f_2\\f_1\\f_0\end{pmatrix}
=z^{n+1}
\begin{pmatrix}f^{[n+1]}_2\\f^{[n+1]}_1\\f^{[n+1]}_0\end{pmatrix}
=O(z^{n+1}).
\label{29}
\end{equation}

The following result holds.

\begin{theorem}\label{th2}
Let $k=0,1,2,\dots$.

1) If $n=3k$, then
$\mdeg{Q_1^{[n]}}=k$, $\mdeg{Q_2^{[n]}}=k$, $\mdeg{Q_3^{[n]}}=k$.

2) If $n=3k+1$, then
$\mdeg R_1^{[n]}=k$, $\mdeg R_2^{[n]}=k$, $\mdeg R_3^{[n]}=k$,
$\mdeg{Q_1^{[n]}}=k$, $\mdeg{Q_2^{[n]}}=k$, $\mdeg{Q_3^{[n]}}=k+1$.

3) If $n=3k+2$, then
$\mdeg R_1^{[n]}=k$, $\mdeg R_2^{[n]}=k$, $\mdeg R_3^{[n]}=k+1$,
$\mdeg{Q_1^{[n]}}=k$, $\mdeg{Q_2^{[n]}}=k+1$, $\mdeg{Q_3^{[n]}}=k+1$.

4) If $n=3k+3$, then
$\mdeg R_1^{[n]}=k$, $\mdeg R_2^{[n]}=k+1$, $\mdeg R_3^{[n]}=k+1$.
\end{theorem}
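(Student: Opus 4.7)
The plan is to argue by strong induction on $n$, using the coupled recurrences \eqref{28}, namely
$Q_j^{[n+1]} = a^{[n+1]}Q_j^{[n]} + zR_j^{[n-1]}$ and
$R_j^{[n+1]} = b^{[n+1]}R_j^{[n]} + Q_j^{[n]}$,
together with the initial data \eqref{28.2}. Throughout I invoke the general position hypothesis (Remark~\ref{rem0}) to conclude that the degree of a sum of two polynomials of equal degree remains that common degree, so that no accidental cancellation of leading coefficients can lower the degree of a sum. The base cases $n=0,1,2,3$ (cases 1--4 with $k=0$) are verified directly from \eqref{28.2} together with one or two applications of the recurrences; the entries $Q_3^{[0]}=0$ and $R_1^{[0]}=0$ are harmless, since in the next iteration they are added to nonzero polynomials of strictly positive degree which then dictate the degree of the output.

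For the inductive step, fix $k\geq 1$ and assume the degree predictions of Theorem~\ref{th2} hold up to index $n=3k$. I pass from $n$ to $n+1$ by using the two one-line identities
\[
\mdeg Q_j^{[n+1]}=\max\{\mdeg Q_j^{[n]},\ 1+\mdeg R_j^{[n-1]}\},\qquad
\mdeg R_j^{[n+1]}=\max\{\mdeg R_j^{[n]},\ \mdeg Q_j^{[n]}\},
\]
reading the three degrees on the right off the inductive hypothesis in the appropriate case 1--4 and checking that the resulting maxima agree with the prediction of the case attached to $n+1$. For the first substep $3k\to 3k+1$, for instance, the hypothesis at $n=3k$ is case 1 and at $n-1=3k-1$ is case 3 with parameter $k-1$, which yields $\mdeg Q_j^{[3k+1]}=(k,k,k+1)$ and $\mdeg R_j^{[3k+1]}=(k,k,k)$ exactly as required by case 2. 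The substeps $3k+1\to 3k+2$ and $3k+2\to 3k+3$ are handled identically using cases 2, 3 and 4.

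The main obstacle is bookkeeping rather than content: Theorem~\ref{th2} as stated lists only the newly created degrees in each of its four cases, so one must keep track of the fact that the $Q$-degrees at $n=3k+3$ implicit in case 4 are the ones supplied by case 1 at the \emph{next} value of $k$, and that the $R$-degrees at $n=3k$ needed as input in the first substep come from case 4 at the \emph{previous} value of $k$. Once this accounting is set out cleanly, the induction reduces to twelve elementary $\max$-computations per value of $k$ and closes without further difficulty.
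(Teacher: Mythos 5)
Your proposal is correct and follows essentially the same route as the paper: induction driven by the recurrences \eqref{28} with initial data \eqref{28.2}, reducing each step to $\max$-of-degree computations under the general position hypothesis (the paper merely packages the induction in blocks of four indices $3k,\dots,3k+3$ rather than single steps $n\to n+1$). One cosmetic caveat: in the base case the term paired with $R_1^{[0]}=0$ (namely $a^{[2]}Q_1^{[1]}$) is a nonzero \emph{constant}, not a polynomial of strictly positive degree, but this does not affect the argument.
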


From \eqref{29}, we obtain
\begin{equation}
(R_1^{[n]},R_2^{[n]},R_3^{[n]}){\,}^{\mathrm T\!}\!(f_2,f_1,f_0)=
R_1^{[n]}f_2+R_2^{[n]}f_1+R_3^{[n]}f_0=O(z^{n+1}).
\label{29.2}
\end{equation}

The next result follows from Theorem~\ref{th2} and~\eqref{29.2}.

\begin{corollary}\label{cor1}
Under the hypotheses of Theorem~\ref{th2}, for an arbitrary $n\in\NN$, let
$k_2=\mdeg{R_1^{[n]}}$, $k_1=\mdeg{R_2^{[n]}}$, $k_0=\mdeg{R_3^{[n]}}$, and
$\myk^{[n]}=(k_0,k_1,k_2)\in\ZZ_{+}^3$ (a~multiindex). Then
$|\myk^{[n]}|=k_0+k_1+k_2=n-1$, and by \eqref{29.2}, the tuple
$[R_3^{[n]},R_2^{[n]},R_1^{[n]}]=[Q_{\myk^{[n]},0},Q_{\myk^{[n]},1},
Q_{\myk^{[n]},2}]$ is a~tuple of Hermite--Pad\'e polynomials of type~I for the tuple of
functions $[f_0,f_1,f_2]$ and the multiindex $\myk^{[n]}$, the order of tangency being
$O(z^{|\myk^{[n]}|+2})=O(z^{n+1})$.
\end{corollary}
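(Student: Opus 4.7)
The plan is to extract the three items required by the definition of Hermite--Padé polynomials of type~I and to assemble them from Theorem~\ref{th2} together with equation~\eqref{29.2}.

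First, I would verify the identity $k_0 + k_1 + k_2 = n - 1$ by a case analysis modulo~$3$, reading off the degrees of $R_1^{[n]}, R_2^{[n]}, R_3^{[n]}$ from Theorem~\ref{th2}. For $n = 3k+1$, part~2 gives degrees $(k, k, k)$ summing to $3k = n-1$; for $n = 3k+2$, part~3 gives $(k, k, k+1)$ summing to $3k+1 = n-1$; for $n = 3k$ with $k \geq 1$, applying part~4 with the parameter~$k$ replaced by $k-1$ gives $(k-1, k, k)$ summing to $3k-1 = n-1$. This covers all $n \in \NN$, and at $n=1$ the explicit initial values~\eqref{28.2} give the multiindex $(0,0,0)$ in agreement with the formula.

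Second, by the very definition of $k_0, k_1, k_2$ the polynomials satisfy $\mdeg R_3^{[n]} = k_0$, $\mdeg R_2^{[n]} = k_1$, $\mdeg R_1^{[n]} = k_2$, matching the degree constraints imposed by the multiindex $\myk^{[n]}$. Combining this with~\eqref{29.2}, which gives
\begin{equation*}
R_3^{[n]} f_0 + R_2^{[n]} f_1 + R_1^{[n]} f_2 = O(z^{n+1}) = O(z^{|\myk^{[n]}|+2}),
\end{equation*}
we recognize the order of tangency demanded by type~I Hermite--Padé polynomials for a tuple of $m+1 = 3$ series (the expected order being $|\myk|+m$). The degree bounds together with this linearization constitute the definition of a Hermite--Padé system, so the corollary follows.

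The only nontrivial bookkeeping is the modular case-split in the first step; no new analytic content beyond Theorem~\ref{th2} and the identity~\eqref{29.2} is required. The main mild obstacle is simply to translate the degree table of Theorem~\ref{th2} (indexed by $j$) into the $(k_0, k_1, k_2)$ indexing of the corollary, in particular to treat the case $n = 3k$ (not explicitly listed for the $R_j$'s in parts 1--3) via part~4 at the shifted index $k-1$.
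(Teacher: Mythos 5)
Your proposal is correct and is essentially the paper's own argument: the authors simply state that the corollary "follows from Theorem~\ref{th2} and~\eqref{29.2}", and your modular case-split (reading parts 2)--4) of Theorem~\ref{th2} for $n\equiv 1,2,0 \pmod 3$, with part~4) applied at the shifted index for $n=3k$) together with the tangency relation~\eqref{29.2} is exactly the bookkeeping they leave implicit. No discrepancy to report.
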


\begin{proof}
The proof of Theorem~\ref{th2} is by induction on~$k$.

I) Let $k=0$.

Assertions 1) and 2) of the theorem are direct consequences of representations \eqref{28.2}.

Using \eqref{28}, we find that
$Q_j^{[2]}=a^{[2]}Q_j^{[1]} +zR_j^{[0]}$, $j=1,2,3$.
By~\eqref{28.2}, we have $R_1^{[0]}=0$, $R_j^{[0]}=\const_j\neq0$, $j=2,3$.
Therefore,
from \eqref{28.2} we have $\mdeg{Q_1^{[2]}}=0$, $\mdeg Q_2^{[2]}=\mdeg
Q_3^{[2]}=1$.
Similarly, an appeal to \eqref{28} shows that $R_j^{[2]}=b^{[2]}R_j^{[1]}+Q_j^{[1]}$.
Therefore, by \eqref{28.2} we get $\mdeg R_1^{[2]}=\mdeg
R_2^{[2]}=0$, $\mdeg R_3^{[2]}=1$, proving assertion~3).

Let us verify assertion 4). From \eqref{28}, we conclude
$R_j^{[3]}=b^{[3]} R_j^{[2]}+Q_j^{[2]}$. Hence by the above properties of
the polynomials $R_j^{[2]}$ and $Q_j^{[2]}$, we find that  $\mdeg R_1^{[3]}=0$, $\mdeg
R_2^{[3]}=\mdeg R_3^{[3]}=1$.

This verifies all assertions 1)--4) of Theorem~\ref{th2} for $k=0$.

II) Now, assuming that assertions 1)--4) of Theorem~\ref{th2} hold for $n=3k$, $n=3k+1$, $n=3k+2$, and $n=3k+3$,
respectively, let us verify them under this hypothesis with $k$~replaced by $k+1$; that is,
for $n=3k+3$, $n=3k+4$, $n=3k+5$, and $n=3k+6$.

1) Let $n=3(k+1)=3k+3$. Then using~\eqref{28} and since assertions 2) and~3) of Theorem~\ref{th2}
hold for $n=3k+1$ and $n=3k+2$, we have
\begin{align}
\mdeg Q_1^{[3k+3]}&=\max\bigl\{\mdeg Q_1^{[3k+2]},1+\mdeg R_1^{[3k+1]}\bigr\}
=\max\{k,1+k\}=k+1,\notag\\
\mdeg Q_j^{[3k+3]}&=\max\{\mdeg Q_j^{[3k+2]},1+\mdeg R_j^{[3k+1]}\}
=\max\{k+1,1+k\}=k+1, \notag
\end{align}
where $j=2,3$.

2) Let $n=3k+4$. Then using~\eqref{28} and employing assertion~4) for $n=3k+3$ and assertion~1) for $n=3k+3$,
we have, for $j=1,2,3$,
$$
\mdeg R_j^{[3k+4]}=\max\{\mdeg R_j^{[3k+3]},\mdeg Q_j^{[3k+3]}\}=k+1.
$$
Similarly, using \eqref{28} and assertions~1) and~3) of Theorem~\ref{th2} for $n=3k+3$
and $n=3k+2$, we have, respectively,
\begin{align}
\mdeg Q_j^{[3k+4]}&=\max\{\mdeg Q_j^{[3k+3]},1+\mdeg R_j^{[3k+2]}\}=k+1
\quad\text{for}\quad j=1,2,\notag\\
\mdeg Q_3^{[3k+4]}&=\max\{\mdeg Q_3^{[3k+3]},1+\mdeg R_3^{[3k+2]}\}=k+2.
\notag
\end{align}

3) Let us verify assertion~3) for $n=3k+5$.
From \eqref{28} and assertion~2) for $n=3k+4$ we have, for $j=1,2$,
\begin{align}
\mdeg R_j^{[3k+5]}&=\max\{\mdeg R_j^{[3k+4]},\mdeg Q_j^{[3k+4]}\}
=\max\{k+1,k+1\}=k+1,\notag\\
\mdeg R_3^{[3k+5]}&=\max\{\mdeg R_3^{[3k+4]},\mdeg Q_3^{[3k+4]}\}
=\max\{k+1,k+2\}=k+2.\notag
\end{align}

Similarly, using \eqref{28} and employing the already proved assertions 1), 2) and~4)
of Theorem~\ref{th2} for $n=3k+3$, $n=3k+4$ and $n=3k+3$, respectively, we get
\begin{align}
\mdeg Q_1^{[3k+5]}&=\max\{\mdeg Q_1^{[3k+4]},1+\mdeg R_1^{[3k+3]}\}
=\max\{k+1,1+k\}=k+1,\notag\\
\mdeg Q_j^{[3k+5]}&=\max\{\mdeg Q_j^{[3k+4]},1+\mdeg R_j^{[3k+3]}\}
=k+2\quad\text{for}\quad j=2,3.\notag
\end{align}

4) Let us prove assertion~4) for $n=3(k+1)+3=3k+6$. From \eqref{28} and
since assertion~3) holds for $n=3k+5$, we find that
\begin{align}
\mdeg R_1^{[3k+6]}&=\max\{\mdeg R_1^{[3k+5]},\mdeg Q_1^{[3k+5]}\}=k+1,\notag\\
\mdeg R_j^{[3k+6]}&=\max\{\mdeg R_j^{[3k+5]},\mdeg Q_j^{[3k+5]}\}
=k+2\quad\text{for}\quad j=2,3.\notag
\end{align}

Theorem~\ref{th2} is proved.
\end{proof}


\subsection{The algorithm for $m=2$ (a~tuple of series $[f_0,f_1,f_2]$)}\label{s2s3}
We are given three series:
$f_0=f_0(z)=\sum\limits_{k=0}^\infty c_{0,k}z^k=c_0+O(z)$,
$f_1=f_1(z)=\sum\limits_{k=0}^\infty c_{1,k}z^k=c_1+O(z)$, and
$f_2=f_2(z)=\sum\limits_{k=0}^\infty c_{2,k}z^k=c_2+O(z)$.

\subsubsection{\bf The initial iteration step: $n=0$}\label{s2s3s1}
We set $f_0^{[0]}:=f_0$, $f_1^{[0]}:=f_1$, $f_2^{[0]}:=f_2$,
$c_j^{[0]}:=c_j$, $j=0,1,2$,
$a^{[0]}:=-c_2^{[0]}/c_1^{[0]}$, $b^{[0]}:=-c_1^{[0]}/c_0^{[0]}$, and define
\begin{gather}
Q_1^{[0]}:=1,\quad Q_2^{[0]}:=a^{[0]},\quad Q_3^{[0]}:=0,\notag\\
R_1^{[0]}:=0,\quad R_2^{[0]}:=1,\quad R_3^{[0]}:=b^{[0]}.
\end{gather}

\subsubsection{\bf First iteration step: $n=1$}\label{s2s3s2}
We set
\begin{align}
f_2^{[1]}:&=f_0^{[0]}=:
\sum\limits_{k=0}^\infty c_{2,k}^{[1]}z^k=c_2^{[1]}+O(z),\notag\\
f_1^{[0]}:&=\frac1z\(f_2^{[0]}-\frac{c_2^{[0]}}{c_1^{[0]}}f_1^{[0]}\)
=\sum_{k=0}^\infty c_{1,k}^{[1]}z^k=c_1^{[1]}+O(z),\notag\\
f_0^{[0]}:&=\frac1z\(f_1^{[0]}-\frac{c_1^{[0]}}{c_0^{[0]}}f_0^{[0]}\)
=\sum_{k=0}^\infty c_{0,k}^{[1]}z^k=c_0^{[1]}+O(z).
\notag
\end{align}
Given $a^{[1]}:=-c_2^{[1]}/c_1^{[1]}$, $b^{[1]}:=-c_1^{[1]}/c_0^{[1]}$, we put
\begin{gather}
Q_1^{[1]}:=a^{[1]},\quad Q_2^{[1]}:=a^{[1]}a^{[0]},\quad Q_3^{[1]}:=z,
\notag\\
R_1^{[1]}:=1,\quad R_2^{[1]}:=b^{[1]}+a^{[0]},\quad R_3^{[1]}:=b^{[1]}b^{[0]}.
\notag
\end{gather}

\subsubsection{\bf Step $(n+1)$ of the iteration ($n\geq1$)}\label{s2s3s3}
Given  $n\geq1$, we put
\begin{align}
f_0^{[n+1]}:&=\frac1z\(f_1^{[n]}-\frac{c_1^{[n]}}{c_0^{[n]}}f_0^{[n]}\)
=\sum_{k=0}^\infty c_{0,k}^{[n+1]}z^k=c_0^{[n+1]}+O(z),\notag\\
f_1^{[n+1]}:&=\frac1z\(f_2^{[n]}-\frac{c_2^{[n]}}{c_1^{[n]}}f_1^{[n]}\)
=\sum_{k=0}^\infty c_{1,k}^{[n+1]}z^k=c_1^{[n+1]}+O(z),\notag\\
f_2^{[n+1]}:&=f_0^{[n]}=:\sum_{k=0}^\infty
c_{2,k}^{[n+1]}z^k=c_2^{[n+1]}+O(z),\notag\\
a^{[n+1]}&:=-\frac{c_2^{[n+1]}}{c_1^{[n+1]}},\quad
b^{[n+1]}:=-\frac{c_1^{[n+1]}}{c_0^{[n+1]}},\notag\\
Q_j^{[n+1]}(z)&=a^{[n+1]}Q_j^{[n]}(z)+zR_j^{[n-1]}(z),\notag\\
R_j^{[n+1]}(z)&=b^{[n+1]}R_j^{[n]}(z)+Q_j^{[n]}(z),\quad j=1,2,3.
\end{align}

\section{The case of an arbitrary  $m\in\NN$: a~tuple of series $[f_0,\dots,f_m]$}\label{s3}

\subsection{Introduction and theoretical results}\label{s3s1}
Given $(m+1)$ formal series
$f_j=f_j(z)=\sum\limits_{j=1}^\infty c_{j,k}z^k$, $f_j\in\CC[[z]]$,
$j=0,\dots,m$, we define $f_j=c_j+O(z)$. We have
\begin{gather}
\begin{pmatrix}
0&0&0&\dots&0&1\\
1&0&0&\dots&0&0\\
0&1&0&\dots&0&0\\
\hdotsfor6\\
0&0&0&\dots&1&0\end{pmatrix}
\begin{pmatrix}f_m\\f_{m-1}\\f_{m-2}\\\vdots\\f_0\end{pmatrix}
=
\begin{pmatrix}f_0\\f_m\\f_{m-1}\\\vdots\\f_1\end{pmatrix},
\label{31}\\
\begin{pmatrix}
z&0&0&0&0&\dots&0&0&0\\
0&1&-\frac{c_m}{c_{m-1}}&0&0&\dots&0&0&0\\
0&0&1&-\frac{c_{m-1}}{c_{m-2}}&0&\dots&0&0&0\\
\hdotsfor9\\
0&0&0&0&0&\dots&0&1&-\frac{c_2}{c_1}\\
-\frac{c_1}{c_0}&0&0&0&0&\dots&0&0&1
\end{pmatrix}
\begin{pmatrix}f_0\\f_m\\f_{m-1}\\\vdots\\f_1\end{pmatrix}
=
\begin{pmatrix}zf_0\\f_m-\frac{c_m}{c_{m-1}}f_{m-1}\\\vdots\\
f_2-\frac{c_1}{c_0}f_1\\f_1-\frac{c_1}{c_0}f_0\end{pmatrix}\notag\\
=z
\begin{pmatrix}f^{[1]}_m\\f^{[1]}_{m-1}\\f^{[1]}_{m-2}\\\vdots\\f^{[1]}_0
\end{pmatrix}=O(z),
\label{32}
\end{gather}
where
\begin{align}
f_m^{[1]}:&=f_0=:\sum_{k=0}^\infty c_{m,k}^{[1]}z^k=c_m^{[1]}+O(z),
\notag\\
f_j^{[1]}:&=\frac1z\(f_{j+1}-\frac{c_{j+1}}{c_j}f_j\)
=\sum_{k=0}^\infty c_{j,k}^{[1]}z^k=c_j^{[1]}+O(z),
\quad j=0,\dots,m-1.
\notag
\end{align}
We set $c_j^{[0]}:=c_j$,
$$
f_j^{[0]}:=f_j=:\sum_{k=0}^\infty c_{j,k}^{[0]}z^k=c_j^{[0]}+O(z),
\quad j=0,\dots,m,
$$
$a_j^{[0]}:=-c_{j}^{[0]}/c_{j-1}^{[0]}$, $j=1,\dots,m$, and define
\begin{gather}
M_1:=
\begin{pmatrix}
0&0&0&\dots&0&1\\
1&0&0&\dots&0&0\\
0&1&0&\dots&0&0\\
\hdotsfor6\\
0&0&0&\dots&1&0\end{pmatrix},
\notag\\
M_2:=
\begin{pmatrix}
z&0&0&0&0&\dots&0&0&0\\
0&1&a_m^{[0]}&0&0&\dots&0&0&0\\
0&0&1&a_{m-1}^{[0]}&0&\dots&0&0&0\\
\hdotsfor9\\
0&0&0&0&0&\dots&0&1&a_2^{[0]}\\
a_1^{[0]}&0&0&0&0&\dots&0&0&1
\end{pmatrix}.
\notag
\end{gather}
We also put
\begin{equation}
M^{[0]}:=M_2M_1=
\begin{pmatrix}
0&0&0&0&0&\dots&0&0&z\\
1&a_m^{[0]}&0&0&0&\dots&0&0&0\\
0&1&a_{m-1}^{[0]}&0&0&\dots&0&0&0\\
\hdotsfor9\\
0&0&0&0&0&\dots&1&a_2^{[0]}&0\\
0&0&0&0&0&\dots&0&1&a_1^{[0]}
\end{pmatrix}.
\label{31.2}
\end{equation}
We have
\begin{equation}
M^{[0]}{\,}^{\mathrm T\!}\!\vec{f}^{\,[0]}=z{\,}^{\mathrm T\!}\!\vec{f}^{\,[1]}=O(z),
\ \text{where}\ 
\vec{f}^{\,[0]}:=(f_0^{[0]},\dots,f_m^{[0]}),\quad
\vec{f}^{\,[1]}:=(f_0^{[1]},\dots,f_m^{[1]}).
\label{31.3}
\end{equation}

For $n\geq1$, we define
\begin{align}
a_j^{[n]}:&=-c_j^{[n]}/c_{j-1}^{[n]},\quad j=1,\dots,m,\notag\\
f_m^{[n+1]}:&=f_0^{[n]}=:\sum_{k=0}^\infty c_{m,k}^{[n+1]}z^k
=c_m^{[n+1]}+O(z),\notag\\
f_j^{[n+1]}:&=\frac1z\(f_{j+1}^{[n]}-\frac{c_{j+1}^{[n]}}{c_j^{[n]}}f_j^{[n]}\)
=\sum_{k=0}^\infty c_{j,k}^{[n+1]}z^k=c_j^{[n+1]}+O(z),
\  j=0,\dots,m-1.
\end{align}
Let
\begin{equation}
M^{[n]}:=
\begin{pmatrix}
0&0&0&0&0&\dots&0&0&z\\
1&a_m^{[n]}&0&0&0&\dots&0&0&0\\
0&1&a_{m-1}^{[n]}&0&0&\dots&0&0&0\\
\hdotsfor9\\
0&0&0&0&0&\dots&1&a_2^{[n]}&0\\
0&0&0&0&0&\dots&0&1&a_1^{[n]}
\end{pmatrix}.
\label{35}
\end{equation}
Then, as in \eqref{31.3}, we have, for $n\geq1$,
\begin{equation}
M^{[n]}{\,}^{\mathrm T\!}\!\vec{f}^{\,[n]}=z{\,}^{\mathrm
T\!}\!\vec{f}^{\,[n+1]},\quad\text{where}\quad
\vec{f}^{\,[n]}:=(f_m^{[0]},\dots,f_0^{[0]})\in\CC^{m+1}[[z]].
\label{36}
\end{equation}
From \eqref{36} it now follows that
$$
M^{[n]}\cdots M^{[0]}{\,}^{\mathrm T\!}\!\vec{f}^{\,[0]}=z^{n+1}{\,}^{\mathrm T\!}\!\vec{f}^{\,[n+1]}=O(z^{n+1}),
$$
and so
\begin{equation}
A^{[n]}{\,}^{\mathrm T\!}\!\vec{f}^{\,[0]}=z^{n+1}{\,}^{\mathrm T\!}\!\vec{f}^{\,[n+1]}=O(z^{n+1}),
\label{36.2}
\end{equation}
where $A^{[n]}:=M^{[n]}\cdots M^{[0]}$. Moreover, $A^{[0]}=M^{[0]}$, where
the matrix $M^{[0]}$ is given by \eqref{31.2}. Let
\begin{equation}
A^{[n]}=
\begin{pmatrix}\vec{A}_1^{[n]}\\\vdots\\\vec{A}_{m+1}^{[n]}\end{pmatrix},
\quad\text{where}\quad
\vec{A}_j^{[n]}:=(A_{j,1}^{[n]},\dots,A_{j,m+1}^{[n]}),
\quad j=1,\dots,m+1.
\label{36.3}
\end{equation}
By definition of the matrix $A^{[n]}$,
$$
A^{[n+1]}=M^{[n+1]}A^{[n]}.
$$
Therefore, an appeal to \eqref{35} yields
\begin{align}
\vec{A}_1^{[n+1]}&=z\vec{A}_{m+1}^{[n]},\label{37}\\
\vec{A}_j^{[n+1]}&=\vec{A}_{j-1}^{[n]}+a_{m+2-j}^{[n+1]}\vec{A}_j^{[n]}
=a_{m+2-j}^{[n+1]}\vec{A}_j^{[n]}+\vec{A}_{j-1}^{[n]},
\quad j=2,\dots,m+1.
\label{38}
\end{align}
Now using \eqref{37}, for $n\geq1$, we transform \eqref{38} to read
\begin{equation}
\begin{aligned}
\vec{A}_2^{[n+1]}&=a_m^{[n+1]}\vec{A}_2^{[n]}+z\vec{A}_{m+1}^{[n-1]},\\
\vec{A}_j^{[n+1]}&=a_{m+2-j}^{[n+1]}\vec{A}_j^{[n]}+\vec{A}_{j-1}^{[n]},
\quad j=3,\dots,m+1.
\end{aligned}
\label{39}
\end{equation}
Relations \eqref{39} are three-term recurrence relations
for $m$~rows $\vec{A}_j^{[n+1]}$, $j=2,\dots,m+1$, of the matrix $A^{[n+1]}$. The first row
is obtained from \eqref{37}. From \eqref{39} it follows that in order to
recursively find the entries of the rows
$\vec{A}_j^{[n+1]}$, $j=2,\dots,m+1$, of the matrix
$A^{[n+1]}$ it suffices to know the entries of the rows
$\vec{A}_j^{[1]}$, $j=2,\dots,m+1$, of the matrix $A^{[1]}$ and the $(m+1)$st row
$\vec{A}_{m+1}^{[0]}$ of the original matrix $A^{[0]}$. Let us find the required rows.

From \eqref{31.2} and since $A^{[0]}=M^{[0]}$ we get
$$
\vec{A}_{m+1}^{[0]}
=(\underbrace{0,0,0,0,\dots,0,1,a_1^{[0]}}_{m+1})\in\CC^{m+1}.
$$
Next, since $A^{[1]}:=M^{[1]}M^{[0]}$ and using~\eqref{35} (for $n=1$), we obtain
\begin{equation}
\begin{aligned}
&\vec{A}_2^{[1]}=(a_m^{[1]},a_m^{[1]}a_m^{[0]},0,0,\dots,0,0,z)\in\CC^{m+1},\\
&\vec{A}_j^{[1]}=(0,\dots,0,\underbrace{1,a_{m+2-j}^{[1]}+a_{m+3-j}^{[0]},
a_{m+2-j}^{[1]}a_{m+2-j}^{[0]}}_{j-2\,,j-1\,,j},0,\dots,0),
\,\, j=3,\dots,m+1,\\
&\vec{A}_{m+1}^{[1]}=(0,0,\dots,0,1,a_1^{[1]}+a_2^{[0]},a_1^{[1]}a_1^{[0]})
\in\CC^{m+1}.
\end{aligned}
\label{40}
\end{equation}
So, the initial conditions for the recurrence relations \eqref{39}
are given by $(m+1)$ vectors from the space $\CC^{m+1}$. By~\eqref{39}, all entries
$A_{j,k}^{[n]}$ are polynomials of~$z$,
$A_{j,k}^{[n]}\in\CC[z]$, for all $n=0,1,\dots$, and $j,k=1,\dots,m+1$.

Given an arbitrary vector $\vec{c}=(c_1,\dots,c_{m+1})\in\CC^{m+1}$, we set
$$
{\,}^{\mathrm B}\!\vec{c}:=(c_{m+1},\dots,c_1).
$$

As in \eqref{39}, we have
\begin{align}
A_{2,k}^{[n+1]}&=a_n ^{[n+1]}A_{2,k}^{[n]}+z A_{m+1,k}^{[n-1]},
\quad k=1,\dots,m+1,\notag\\
A_{j,k}^{[n+1]}&=a_{m+2-j}^{[n+1]}A_{j,k}^{[n]}+A_{j-1,k}^{[n]},
\quad k=1,\dots,m+1,\quad j=3,\dots,m+1.\notag
\end{align}

The following result holds.

\begin{theorem}\label{th3}
Let $m\in\NN$, $n\geq m-1$ and $n=m-1+(m+1)k+\ell$, where
$\ell\in\{0,\dots,m\}$, $k\in\{0,1,2,\dots\}$
($\ell=(n-m+1)\pmod{m+1}$). Then

1) If $\ell=0$, then $\mdeg A_{m+1,s}^{[n]}=k$ for all $s=1,\dots,m+1$.
The vector ${\,}^{\mathrm B}\!\vec{A}_{m+1}^{[n]}=\vec{Q}_{\myk^{[n]}}(\vec{f})
=(Q_{\myk^{[n]},0},Q_{\myk^{[n]},1},\dots,Q_{\myk^{[n]},m})$ is the vector of
Hermite--Pad\'e polynomials of type~I for the vector series
${\,}^{\mathrm B}\!\vec{f}=(f_0,\dots,f_m)$
and the multiindex  $\myk^{[n]}=(k,k,\dots,k)$. The order of tangency is
$|\myk^{[n]}|+m=(m+1)k+m=n+1$; that is,
\begin{equation}
{\,}^{\mathrm B}\vec{Q}_{\myk^{[n]}}(\vec{f}){\,}^{\mathrm T\!}\!\vec{f}=\vec{A}_{m+1}^{[n]}{\,}^{\mathrm T\!}\!\vec{f}=O(z^{n+1}).
\label{40.2}
\end{equation}

2) If  $\ell=1,\dots,m$, then
$\mdeg{A}_{m+1,m+1-s}^{[n]}=k+1$ for $s=0,\dots,\ell-1$ and
$\mdeg{A}_{m+1,m+1-s}^{[n]}=k$ for $s=\ell,\dots,m$.
Moreover, the vector ${\,}^{\mathrm B}\!\vec{A}_{m+1}^{[n]}=\vec{Q}_{\myk^{[n]}}(\vec{f})
=(Q_{\myk^{[n]},0},Q_{\myk^{[n]},1},\dots,Q_{\myk^{[n]},m})$ is the vector of
Hermite--Pad\'e polynomials of type~I for the vector function
${\,}^{\mathrm B}\!\vec{f}=(f_0,\dots,f_m)$
and the multiindex
$$\myk^{[n]}=(\underbrace{k+1,\dots,k+1}_{\ell} ,\ \underbrace{k,\dots,k}_{m+1-\ell}) .$$ 
The order of tangency is
$|\myk^{[n]}|+m=(m+1)k+m+\ell=n+1$; that is,
\begin{equation}
{\,}^{\mathrm B}\vec{Q}_{\myk^{[n]}}(\vec{f}){\,}^{\mathrm T\!}\!\vec{f}=\vec{A}_{m+1}^{[n]}{\,}^{\mathrm T\!}\!\vec{f}=O(z^{n+1}).
\label{40.3}
\end{equation}
\end{theorem}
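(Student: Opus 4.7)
The tangency relations \eqref{40.2} and \eqref{40.3} are immediate: reading off the last row of the matrix identity \eqref{36.2}, $A^{[n]}\,{}^{\mathrm T\!}\vec{f}^{\,[0]} = O(z^{n+1})$, gives $\vec{A}_{m+1}^{[n]}\,{}^{\mathrm T\!}\vec{f} = O(z^{n+1})$ at once. Combined with the degree counts stated in the theorem, this will identify ${}^{\mathrm B}\!\vec{A}_{m+1}^{[n]}$ as a vector of Hermite--Pad\'e polynomials of type~I for the multiindex $\myk^{[n]}$, since $|\myk^{[n]}|+m = n+1$ in both cases~1) and~2) by direct computation.

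The substantive content is therefore the degree count. I would argue by induction on~$n$. Because the three-term recurrences \eqref{39} couple all the rows $\vec{A}_j^{[n]}$, $j=2,\dots,m+1$, the inductive hypothesis must control the degree of every such row, not only the last one. I would therefore strengthen the claim to a uniform formula for $\mdeg A_{j,s}^{[n]}$ as a function of $(n,j,s)$. Guided by the patterns worked out in Theorems~\ref{th1} and~\ref{th2}, the natural candidate is
\[
\mdeg A_{j,s}^{[n]} = \biggl\lfloor \frac{n+s-j+1}{m+1}\biggr\rfloor \qquad (j=2,\dots,m+1;\ s=1,\dots,m+1),
\]
valid once the numerator is nonnegative. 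The theorem then follows by specializing $j=m+1$ and $n=m-1+(m+1)k+\ell$: the formula yields $k$ in the first $m+1-\ell$ columns and $k+1$ in the last $\ell$ columns, which is exactly the pattern claimed.

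The base case consists of inspecting the initial rows $\vec{A}_{m+1}^{[0]}$ (from \eqref{31.2}) and $\vec{A}_j^{[1]}$ for $j=2,\dots,m+1$ (from \eqref{40}); these settle the degrees for $n=0,1$ and anchor the induction. For the inductive step, the recurrence $\vec{A}_j^{[n+1]} = a_{m+2-j}^{[n+1]}\vec{A}_j^{[n]} + \vec{A}_{j-1}^{[n]}$ for $j\geq 3$ yields $\mdeg A_{j,s}^{[n+1]} = \max\bigl(\mdeg A_{j,s}^{[n]},\,\mdeg A_{j-1,s}^{[n]}\bigr)$; under the inductive hypothesis the second term dominates, producing a clean ``shift up one row'' identity $\mdeg A_{j,s}^{[n+1]} = \mdeg A_{j-1,s}^{[n]}$. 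The row-2 recurrence $\vec{A}_2^{[n+1]} = a_m^{[n+1]}\vec{A}_2^{[n]} + z\vec{A}_{m+1}^{[n-1]}$ behaves similarly: the inductive hypothesis applied to row $m+1$ at step $n-1$ shows that in exactly one column $s$ the term $1+\mdeg A_{m+1,s}^{[n-1]}$ strictly exceeds $\mdeg A_{2,s}^{[n]}$, so row~2 picks up one additional degree in that column, and this column advances cyclically through the $m+1$ columns as $n$ increases.

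The main obstacle I expect is ensuring that each maximum above is \emph{strictly} attained on one side, so that no accidental cancellation drops a degree below its claimed value. This is precisely the role of the ``general position'' hypothesis of Remark~\ref{rem0}: under that nondegeneracy assumption, the leading coefficients involved in the relevant maxima never cancel, and the desired equalities (rather than mere upper bounds) hold. Once the degree formula for row $m+1$ is in hand, the case split on $\ell = (n-m+1) \bmod (m+1)$ directly reproduces cases~1) and~2) of Theorem~\ref{th3}, in complete analogy with the cyclic argument used in the proof of Theorem~\ref{th2}.
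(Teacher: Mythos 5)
Your strategy coincides with the paper's: the paper also reduces Theorem~\ref{th3} to a degree count for \emph{all} rows of $A^{[n]}$ (its Lemma~\ref{lem1}), proved by induction on $n$ via the recurrences \eqref{37}--\eqref{38}, with the monotonicity $\mdeg\vec{A}^{[n]}_{j-1}\geq\mdeg\vec{A}^{[n]}_{j}$ playing exactly the role of your ``second term dominates'' observation, and with the general-position hypothesis invoked (Remark~\ref{rem0}) to turn the resulting upper bounds into equalities. Your closed-form guess $\mdeg A_{j,s}^{[n]}=\bigl\lfloor(n+s-j+1)/(m+1)\bigr\rfloor$ is equivalent to the row-by-row description \eqref{9.1}--\eqref{9.2} of Lemma~\ref{lem1} (substitute $n=m-1+(m+1)k+\ell$ and check the two regimes $j>\ell$ and $j\leq\ell$), and your inductive step is sound for $n\geq m-1$, including the claim that the term $z\vec{A}^{[n-1]}_{m+1}$ raises the degree of row~$2$ in exactly one, cyclically advancing, column.

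The one genuine gap is the base case. You anchor the induction at $n=0,1$ using \eqref{31.2} and \eqref{40}, but the uniform formula is false in the intermediate range $2\leq n\leq m-2$: there the product $M^{[n]}\cdots M^{[0]}$ is still band-structured, so many entries $A^{[n]}_{j,s}$ are identically zero while your formula predicts degree~$0$ for them (your caveat ``numerator nonnegative'' does not exclude these). Already each row of $A^{[2]}$ is supported on at most four columns, so for $m\geq4$ it has vanishing entries in positions where the formula asserts a nonzero constant. Consequently the dominance claims in your inductive step, which presuppose that every relevant entry is a nonzero polynomial of the predicted degree, cannot be propagated from $n=1$ without separately tracking the zero pattern through this filling phase. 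The paper sidesteps this by starting the induction at $n=m-1$ and computing the shape of $A^{[m-1]}=M^{[m-1]}\cdots M^{[0]}$ explicitly (display \eqref{9.5}): row $j$ consists of nonzero constants in columns $1,\dots,j$ and first-degree polynomials in columns $j+1,\dots,m+1$, which is precisely your formula at $n=m-1$. Replacing your base case by this computation closes the gap; the rest of your outline then goes through as in the paper.
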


So, for any $n\geq m-1$, $Q_{\myk^{[n]},j}=A_{m+1,m+1-j}^{[n]}$,
$j=0,\dots,m$, are Hermite--Pad\'e polynomials of type~I for the tuple $[f_0,\dots,f_m]$
and the multiindex $\myk^{[n]}=(k_0,\dots,k_m)$, where $k_j=
\mdeg{A_{m+1,m+1-j}^{[n]}}$, $j=0,\dots,m$, and the order of tangency is
$$
|\myk^{[n]}|+m=\sum_{j=0}^m \mdeg{A}_{m+1,m+1-j}^{[n]}+m=n+1
$$
(the multiindex $\myk^{[n]}$ is uniquely determined from the given number $n\geq m-1$).

\begin{remark}\label{rem20} Note that in the above recurrence algorithm
all tuples of
Hermite--\allowbreak Pad\'e polynomials corresponding to the multiindices of the form
$(k,k,k,\dots,k,k)$, $(k+1,k,k,\dots,k,k)$, $(k+1,k+1,k,\dots,k,k),\dots,
(k+1,k+1,k+1,\dots,k+1,k)$ are already evaluated
by the time when the tuple of Hermite--Pad\'e polynomials corresponding to the multiindex
$(k+1,k+1,k+1,\dots,k+1,k+1)$ is found.
According to~\cite{NiSo88}, such indices are called
{\it proper}.
\end{remark}

\begin{remark}[\rm (cf.\ Remark \ref{rem1})]\label{rem2}
If the iteration process is started from the vector function $\vec{f}^{\,[0]}=(f_0,\dots,f_m)\in\CC^{m+1}$
(instead of $\vec{f}^{\,[0]}=(f_m,\dots,f_0)$), then we get Hermite--Pad\'e polynomials of type~I
for the multiindices $(k,\dots,k,k,k),\dots$,
$(k,\dots,k,k,k+1)$, $(k,\dots,k,k+1,k+1),\dots$.

If we set $\vec{f}^{\,[0]}=(\underbrace{f_{s-1},\dots,f_0}_{s},f_m,\dots,f_s)$,
then the iteration process will produce Hermite--\allowbreak Pad\'e polynomials
for the multiindices (cf.~\cite{MaTs17}):
$$ (k,\dots,k) ,\ 
(k,\dots,k,\underbrace{k+1}_s,k,k,\dots,k) ,\ 
(k,\dots,k,\underbrace{k+1,k+1}_{s,s+1},k,\dots,k) .$$
\end{remark}

\begin{remark}\label{rem3} Hermite--Pad\'e polynomials are known to satisfy
many different recurrence relations, which are widely useful in theoretical studies
of Hermite--Pad\'e polynomials  (see,
for example, \cite{LoLo18}, \cite{LoMi18},~\cite{BaGeLo18} and the references given therein).
However, as a~rule, such results are obtained for a~system of Laurent series defined
at the point at infinity and are, in general, more involved than the three-term relations \eqref{39}.
\end{remark}

\begin{proof}[Proof of Theorem~\ref{th3}]
For vector $\vec{p}(z):=(p_1(z),\dots,p_{m+1}(z))\in\CC^{m+1}[z]$, where
$p_j(z)\in\CC[z]$, we define
$$
\mdeg\vec{p}(z):=(\mdeg p_1(z),\dots,\mdeg p_{m+1}(z))\in\ZZ^{m+1}_{+}.
$$
For vectors $\vec{p}(z)$ and $\vec{q}(z)$, $\vec{p},\vec{q}\in\CC^{m+1}[z]$,
the inequality $\mdeg\vec{p}\geq\mdeg\vec{q}$ means by definition that
$\mdeg\vec{p}-\mdeg\vec{q}\in \ZZ^{m+1}_{+}$. Similarly, the inequality $\mdeg\vec{p}>\mdeg\vec{q}$ means that
$\mdeg\vec{p}-\mdeg\vec{q}\in \NN^{m+1}$.

We set $\vec{e}_1:=(1,0,\dots,0)\in\ZZ^{m+1}$ and define
$\vec{e}_{m+1}:=(1,1,\dots,1)\in\NN^{m+1}$.

The following Lemma~\ref{lem1}, which holds under the hypotheses of Theorem~\ref{th3},
is the main ingredient in the proof of this theorem.

\begin{lemma}\label{lem1}
Let $n\geq m-1$. Then
\begin{equation}
\mdeg\vec{A}^{[n]}_{j-1}\geq \mdeg\vec{A}^{[n]}_{j}\quad\text{for}\quad
j=2,\dots,m.
\label{9.0}
\end{equation}
Let $n=m-1+(m+1)k+\ell$, where $\ell=(n-(m-1))\pmod{m+1}\in\{0,1,\dots,m\}$,
$k=0,1,\dots$. Then the following relations hold:
\begin{align}
\text{for all}\quad\ell\quad
\mdeg\vec{A}^{[n]}_{\ell+s}&
=(\underbrace{k,\dots,k}_s,\underbrace{k+1,\dots,k+1}_{m+1-s}),
\quad s=1,\dots,m+1-\ell,\label{9.1}\\
\text{for}\quad \ell\geq1\quad
\mdeg\vec{A}^{[n]}_{\ell-j}
&=(\underbrace{k+1,\dots,k+1}_{m+1-j},\underbrace{k+2,\dots,k+2}_j),\quad
j=0,\dots,\ell-1.
\label{9.2}
\end{align}
\end{lemma}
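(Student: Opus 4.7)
The plan is to prove Lemma~\ref{lem1} by induction on $n \geq m-1$, using the recurrence relations~\eqref{37}--\eqref{39} together with the general-position hypothesis of Remark~\ref{rem0}, which ensures that the degree of a sum of polynomials equals the componentwise maximum of the degrees of the summands. The base case $n = m-1$ corresponds to $k = \ell = 0$ and asserts $\mdeg \vec{A}_j^{[m-1]} = (\underbrace{0,\dots,0}_j, \underbrace{1,\dots,1}_{m+1-j})$ for $j = 1,\dots,m+1$. I would verify the base case by a short secondary induction starting from the explicit rows $\vec{A}_j^{[1]}$ given in~\eqref{40}, applying~\eqref{37}--\eqref{39} a total of $m-2$ times, and observing that the boundary between the $0$-entries and the $1$-entries in each degree vector shifts cyclically by one position with each iteration.

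For the inductive step, assume the lemma at step $n$ with parameters $(k,\ell)$ and deduce it at step $n+1$, whose target parameters are $(k,\ell+1)$ if $\ell<m$ and $(k+1,0)$ if $\ell = m$. The computation naturally splits by row. Row $j=1$ is handled directly by~\eqref{37}: $\mdeg \vec{A}_1^{[n+1]} = \mdeg \vec{A}_{m+1}^{[n]} + \vec{e}_{m+1}$, which I would compare against the target formula in each of the two cases of~$\ell$. For $j = 3,\dots,m$, the relation~\eqref{38} gives $\mdeg \vec{A}_j^{[n+1]} = \max\{\mdeg \vec{A}_j^{[n]},\, \mdeg \vec{A}_{j-1}^{[n]}\}$, which collapses to $\mdeg \vec{A}_{j-1}^{[n]}$ by the induction hypothesis~\eqref{9.0} at step $n$; this matches the target, because the pattern prescribed by~\eqref{9.1}--\eqref{9.2} for the $j$th row at step $n+1$ coincides with that for the $(j-1)$st row at step $n$ in both cases of $\ell$. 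For $j = m+1$, I would compare $\mdeg \vec{A}_{m+1}^{[n]}$ with $\mdeg \vec{A}_m^{[n]}$ directly: their degree vectors differ in exactly one coordinate, and their pointwise maximum equals the required target. Finally, for $j = 2$, \eqref{39} yields $\mdeg \vec{A}_2^{[n+1]} = \max\{\mdeg \vec{A}_2^{[n]},\,\mdeg \vec{A}_{m+1}^{[n-1]}+\vec{e}_{m+1}\}$, which is evaluated using the induction hypothesis at both steps $n$ and $n-1$ (the latter being available whenever $n \geq m$; the very first inductive step, $n = m-1 \to m$, requires $\vec{A}_{m+1}^{[m-2]}$, which is computed as part of the base-case analysis).

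The main obstacle is the bookkeeping for rows $j = 2$ and $j = m+1$, since these are exactly the rows not covered by the inequality~\eqref{9.0} and they straddle the boundary between the~\eqref{9.1} and~\eqref{9.2} regimes of the degree pattern. In particular, for $j = 2$ one must treat separately the subcases $\ell = 0$, $1 \le \ell < m$, and $\ell = m$, each producing a slightly different shape for the degree vectors of the two summands but leading in each case to precisely the target pattern. Once all row-degree vectors at step $n+1$ are verified against~\eqref{9.1} and~\eqref{9.2}, the inequality~\eqref{9.0} at step $n+1$ follows by an immediate pointwise comparison of the resulting explicit formulas.
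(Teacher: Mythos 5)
Your proposal is correct and follows essentially the same route as the paper: induction on $n$ with base case $n=m-1$ obtained by tracking the staircase degree pattern through the first $m$ left-multiplications by the matrices $M^{[p]}$ (the paper phrases this as a structural description of the product $M^{[m-1]}\dotsb M^{[0]}$, which is the same computation as your secondary induction from \eqref{40}), and inductive step driven by \eqref{37}--\eqref{39} together with the general-position rule that degrees of sums are componentwise maxima, with the monotonicity \eqref{9.0} collapsing those maxima to $\mdeg\vec{A}^{[n]}_{j-1}$. The only cosmetic difference is that the paper derives \eqref{9.0} at step $n+1$ along the way from the recurrences rather than reading it off the finished formulas, and it handles row $2$ via \eqref{38} (using $\vec{A}^{[n]}_1$) instead of \eqref{39}, which sidesteps the level-$(n-1)$ edge case you correctly flag.
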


From relations \eqref{9.1} and~\eqref{9.2} of Lemma~\ref{lem1} for $s=m+1-\ell$ and $j=\ell-1$ for $\ell\geq1$ we
get, respectively, that
\begin{equation}
\begin{aligned}
\mdeg\vec{A}^{[n]}_{m+1}&=(\underbrace{k,\dots,k}_{m+1-\ell},
\underbrace{k+1,\dots,k+1}_{\ell}) \quad\text{and}\quad\\
\mdeg\vec{A}^{[n]}_1&=(\underbrace{k+1,\dots,k+1}_{m+2-\ell},
\underbrace{k+2,\dots,k+2}_{\ell-1}).
\label{9.3}
\end{aligned}
\end{equation}
Now \eqref{9.3} yields  $\mdeg{A}^{[n]}_{m+1,j}=k$ for $j=1,\dots,m+1-\ell$ and $\mdeg A^{[n]}_{m+1,j}=k+1$ for $j=m+1-\ell+1,\dots,m+1$.
These relations can be equivalently written  as
$\mdeg A^{[n]}_{m+1,m+1-s}=k+1$ for $s=0,\dots,\ell-1$ and $\mdeg A^{[n]}_{m+1,m+1-s}=k$ for $s=\ell,\dots,m$.
This implies assertions 1) and~2) of Theorem~\ref{th3}.

Theorem \ref{th1} is proved.
\end{proof}

\begin{proof}[Proof of Lemma~\ref{lem1}] We argue by induction on~$n$.

1) Let $n=m-1$. Then
\begin{equation}
A^{[n]}=M^{[n]}A^{[n-1]}=M^{[n]}\dotsb M^{[1]}M^{[0]},
\label{9.4}
\end{equation}
where the matrix $M^{[0]}$ has the form~\eqref{31.2}, and the matrix $M^{[n]}$ has the analogous form \eqref{35}.
Let $M^{[p]}$ be the matrix obtained from the matrix $M^{[n]}$
by replacing $n$ by~$p$, $p=0,1,\dots,n$. Then
$M^{[p]}\in\GL_{\CC}(m+1,m+1)$. Moreover, $\det M^{[p]}=(-1)^pz\not\equiv0$.

Let $B_2:=M^{[p]}B_1$, where $B_1\in\GL_{\CC}(m+1,m+1)$. The
matrix $B_2$ is the matrix~$B_1$ multiplied on the left by the matrix $M^{[p]}$.
The following facts are direct consequences of the structure \eqref{35} of the matrix  $M^{[p]}$:

1) the last $(m+1)$st row of the matrix $B_1$ is multiplied by~$z$ and becomes the
first row of the matrix $B_2$; so, the degree of the corresponding polynomial elements of the
original matrix $B_1$ is increased precisely by~1;

2) the second row of the matrix $B_2$ is obtained as follows: the second row of the matrix
$B_1$ is multiplied by the entry $a^{[p]}_m$ and is summed with an entry of the first row
of the matrix $B_1$;

3) for $j\geq2$, the corresponding $j$th row of the matrix $B_2$ is obtained as follows:
the $j$th row of the matrix $B_1$ is multiplied by the entry
$a^{[p]}_{m-j+2}$ and is summed with an entry of the $(j-1)$st row of the matrix $B_1$.

Hence, since now we now know the explicit form \eqref{31.2} of the matrix  $M^{[0]}$, it follows that
for $n=m-1$ the matrix $A^{[m-1]}=M^{[m-1]}\dotsb M^{[0]}$ reads as
\begin{equation}
\begin{pmatrix}
*&*z+*&*z+*&*z+*&\dots&*z+*&*z+*&*z+*\\
*&*&*z+*&*z+*&\dots&*z+*&*z+*&*z+*\\
*&*&*&*z+*&\dots&*z+*&*z+*&*z+*\\
\hdotsfor8\\
*&*&*&*&\dots&*&*&*z+*\\
c_m&c_{m-1}&c_{m-2}&c_{m-3}&\dots&c_2&c_1&c_0
\end{pmatrix},
\label{9.5}
\end{equation}
where  `$*$' and `$*z+*$' denote, respectively, some complex quantities and some first-degree
polynomials of~$z$ with complex coefficients. The precise values of these quantities and
the explicit form of these polynomials is immaterial here. The quantities $c_0,c_1,\dots,c_m$ are some complex
constant which are not simultaneously zero.\footnote{Here it is worth recalling that all
the original series $f_0,\dots,f_m$ are in general position. So,
$(c_0,\dots,c_m)\neq(0,\dots,0)$, inasmuch as
$\mdet M^{[p]}=(-1)^pz\not\equiv0$,
and similarly,
$\mdet A^{[p]}=(-1)^{pm}z\not\equiv0$.}
Moreover, from \eqref{36} and~\eqref{36.2} we have
\begin{equation}
c_0f_0+c_1f_1+\dots +c_{m-1}f_{m-1}+c_mf_m=O(z^m)=O(z^{n+1}).
\label{9.6}
\end{equation}
Relation \eqref{9.6} means that the nontrivial tuple of constant quantities
$[c_0,c_1,\dots$, $c_{m-1},c_m]$ 
is a~tuple of Hermite--Pad\'e polynomials of type~I
for the tuple of series $[f_0,f_1,\dots,f_{m-1},f_m]$ and
the multiindex
$\myk^{[m-1]}=(0,0,\dots,0,0)$ with the corresponding order of tangency, which is equal to
$|\myk^{[m-1]}|+m=m=n+1$.

Thus, representation \eqref{9.5} implies that the conclusions of Lemma~\ref{lem1}
with $n=m-1$ are true.

2) Assuming now that the conclusions of Lemma~\ref{lem1} hold for some
$n\geq m-1$, let us show that these results also hold with $n$ replaced by~$n+1$.

Let $n=m-1+(m+1)k+\ell$, $\ell=(n-m+1)\pmod{m+1}\in\{0,\dots,m\}$.

a) Let us prove inequality~\eqref{9.0}; that is, we need to show that
$\mdeg\vec{A}^{[n+1]}_j\geq\mdeg\vec{A}^{[n+1]}_{j+1}$ for $j=1,\dots,m$.

Since \eqref{9.3} holds by the induction assumption,
from \eqref{37} we have
$\mdeg\vec{A}^{[m+1]}_1\allowbreak =(k+1,\dots,k+1)$ for $\ell=0$ and
$\mdeg\vec{A}^{[m+1]}_1
=(\underbrace{k+1,\dots,k+1}_{m+1-\ell},\underbrace{k+2,\dots,k+2}_{\ell})$ for $\ell\geq1$. Moreover,
$\mdeg\vec{A}^{[n]}=(k,\underbrace{k+1,\dots,k+1}_m)$ for $\ell=0$ and
$\mdeg\vec{A}^{[n]}=(k+1,\dots,k+1)$ for $\ell=1$. As a~result,
$\mdeg\vec{A}^{[n+1]}_1>\mdeg\vec{A}^{[n]}_1$ for all $\ell$.
Using \eqref{9.1} and~\eqref{9.2}, which hold by the induction assumption,
we get  $\mdeg\vec{A}^{[n+1]}_1>\mdeg\vec{A}^{[n]}_1$.


Now an appeal to the recurrence relations~\eqref{38} shows that
\begin{equation}
\label{9.8}
\mdeg\vec{A}^{[n+1]}_1\geq\mdeg\vec{A}^{[n]}_1=\mdeg\vec{A}^{[n+1]}_2.
\end{equation}
The inequality
\begin{equation}
\label{9.9}
\mdeg\vec{A}^{[n+1]}_j\geq\mdeg\vec{A}^{[n+1]}_{j+1}, \quad j=2,\dots,m,
\end{equation}
follows from the recurrence relation~\eqref{38} and the induction assumption.
Hence from \eqref{9.8} and~\eqref{9.9} we get
$$
\mdeg\vec{A}^{[n+1]}_j\geq\mdeg\vec{A}^{[n+1]},\quad j=1,2,\dots,m.
$$

2) Let us now prove \eqref{9.1} and~\eqref{9.2} for the  next induction step; that is,
for  $n+1$. There are two cases to consider: $\ell\in\{0,\dots,m-1\}$ and $\ell=m$.

a) Let $\ell\in\{0,\dots,m-1\}$. Then $\ell+1\in\{1,\dots,m\}$ and $n+1=m-1+(m+1)k+\ell+1$. From \eqref{37} and~\eqref{9.1} for $s=m+1-\ell$
we get
\begin{align}
\mdeg\vec{A}^{[n+1]}_1 &=\mdeg\vec{A}^{[n+1]}_{m+1}+\vec{e}_{m+1}
=(\underbrace{k+1,\dots,k+1}_{m+1-\ell},\underbrace{k+2,\dots,k+2}_{\ell})\notag\\
&=(\underbrace{k+1,\dots,k+1}_{m+1-j},\underbrace{k+2,\dots,k+2}_{j})\bigr|_{j=\ell}
=\mdeg\vec{A}^{[n+1]}_{\ell'-j}\bigr|_{j=\ell'-1},\notag
\end{align}
where $\ell'=\ell+1$.
This proves formula~\eqref{9.2} for $n+1$ and $j=\ell=\ell'-1$.

By the induction assumption, $\mdeg\vec{A}^{[n]}_{j-1}\geq\mdeg\vec{A}^{[n]}_j$
 for $j=2,\dots,m+1$, and hence from \eqref{38} we have
\begin{equation}
\mdeg\vec{A}^{[n+1]}_j=\mdeg\vec{A}^{[n]}_{j-1},\quad j=2,\dots,m+1.
\label{9.10}
\end{equation}
Therefore,  for $\ell\in\{1,\dots,m-1\}$ and $\ell'=\ell+1\in\{2,\dots,m\}$ it follows by \eqref{9.1} that
\begin{align}
\mdeg\vec{A}^{[n+1]}_{\ell'+s}=\mdeg\vec{A}^{[n]}_{\ell+s}=(\underbrace{k,\dots,k}_{s},\underbrace{k+1,\dots,k+1}_{m+1-s}), \notag \\
\quad s=1,\dots,m+1-\ell'=1,\dots,m-\ell.
\label{9.11}
\end{align}
This proves \eqref{9.1} for the induction step $(n+1)$.

Let us verify \eqref{9.2}. By~\eqref{38} and using inequality \eqref{9.0}, we have
\begin{equation}
\mdeg\vec{A}^{[n+1]}_{\ell'-j}=\mdeg\vec{A}^{[n]}_{\ell'-j-1}=\mdeg\vec{A}^{[n]}_{\ell-j}
=(\underbrace{k+1,\dots,k+1}_{m+1-j},\underbrace{k+2,\dots,k+2}_{j}),
\label{9.12}
\end{equation}
where $j=0,\dots,\ell'-2=0,\dots,\ell-1$. For $j=\ell'-1=\ell$, we have
$\mdeg\vec{A}^{[n+1]}_{\ell'-j}=\mdeg\vec{A}^{[n+1]}_1$; \eqref{9.12} was proved above.

b)  Now let us assume that $\ell=m$. Then $n+1=m-1+(m+1)(k+1)$ and $\ell'=0$. In this case,
from \eqref{37} and~\eqref{9.3} we have  $s=m+1-s$ and
\begin{align}
\mdeg\vec{A}^{[n+1]}_1&=\vec{e}_1+\mdeg\vec{A}^{[n]}_{m+1}=
(k,\underbrace{k+1,\dots,k+1}_m)+\vec{e}_1
=(k+1,\underbrace{k+2,\dots,k+2}_m)\notag\\
&=\mdeg\vec{A}^{[n+1]}_{\ell'+s}\bigr|_{\ell'=0,s=1}
=(\underbrace{k+1,\dots,k+1}_s,\underbrace{k+2,\dots,k+2}_{m+1-s})\bigr|_{s=1}.
\notag
\end{align}
Next, for  $s=2,\dots,m+1$, $s-1=s'=m-j'$, we have
\begin{align}
\mdeg\vec{A}&^{[n+1]}_{\ell'+s}=\mdeg\vec{A}^{[n+1]}_s=\mdeg\vec{A}^{[n]}_{s-1}=\mdeg\vec{A}^{[n]}_{m-j'}
\notag\\
&=(\underbrace{k+1,\dots,k+1}_{m+1-j'},\underbrace{k+2,\dots,k+2}_{j'})
=(\underbrace{k+1,\dots,k+1}_{s},\underbrace{k+2,\dots,k+2}_{m+1-s}).
\notag
\end{align}
Hence \eqref{9.1} is proved in the $(n+1)$st step also for $\ell=m$.

Lemma~\ref{lem1} is proved.
\end{proof}


\subsection{The algorithm for an arbitrary $m\in\NN$
(a~tuple of series $[f_0,\dots,f_m]$)}\label{s3s2}

\subsubsection{\bf Construction of new series $f_0^{[n]},\dots,f_m^{[n]}$,
$n=1,2,\dots$}\label{s3s2s1}
We are given $m\in\NN$ and series  $f_0,\dots,f_m\in\CC[[z]]$,
$f_j=f_j(z)=\sum\limits_{k=0}^\infty c_{j,k}z^k=c_j+O(z)$.

\smallskip
{\bf Initial step}.
We set
$f_j^{[0]}:=f_j=\sum\limits_{k=0}^\infty c_{j,k}z^k
=:\sum\limits_{k=0}^\infty c^{[0]}_{j,k}z^k=c_j^{[0]}+O(z)$,
$j=1,\dots,m$;
$a_j^{[0]}:=-c_j^{[0]}/c_{j-1}^{[0]}$, $j=1,\dots,m$.

\smallskip
{\bf Step $1$}.
We define
\begin{align}
f_m^{[1]}:&=f_0^{[0]}=:\sum_{k=0}^\infty c_{m,k}^{[1]}z^k=c_m^{[1]}+O(z),\notag\\
f_j^{[1]}:&=\frac1z\(f_{j+1}^{[0]}-\frac{c_{j+1}^{[0]}}{c_j^{[0]}}f_j^{[0]}\)
=\sum_{k=0}^\infty c_{j,k}^{[1]}z^k=c_j^{[1]}+O(z),
\quad j=0,\dots,m-1,\notag\\
a_j^{[1]}:&=-c_j^{[1]}/c_{j-1}^{[1]},\quad j=1,\dots,m.\notag
\end{align}

\smallskip
{\bf Step $(n+1)$ ($n\geq1$)}.
We set
\begin{align}
f_m^{[n+1]}:&=f_0^{[n]}=:\sum_{k=0}^\infty c_{m,k}^{[n+1]}z^k
=c_m^{[n+1]}+O(z)   ,\notag\\
f_j^{[n+1]}:&=\frac1z\(f_{j+1}^{[n]}+a_{j+1}^{[n]}f_j^{[n]}\)
=\sum_{k=0}^\infty c_{j,k}^{[n+1]}z^k=c_j^{[n+1]}+O(z),
\ \ j=0,\dots,m-1,\notag\\
a_j^{[n+1]}:&=-c_j^{[n+1]}/c_{j-1}^{[n+1]},\quad j=1,\dots,m.\notag
\end{align}

So, from a given tuple of series $f_0,\dots,f_m\in\CC[[z]]$, we have constructed new
series $f_0^{[n]},\dots,f_m^{[n]}\in\CC[[z]]$ and found the quantities
$a_1^{[n]},\dots,a_m^{[n]}\in\CC$ for all $n=0,1,2,\dots$.

\begin{remark}\label{rem4}
Note that the main purpose of this step in \S\,\ref{s3s2s1} is to find the
$m$ quantities $a_1^{[n]},\dots,a_m^{[n]}\in\CC$ ($n=1,2,\dots$).
\end{remark}

\begin{remark}\label{rem7}
By definition of the step of \S\,\ref{s3s2s1} the new series
$f^{[n+1]}_0,\dots,f^{[n+1]}_m$ can be evaluated from the series $f^{[n]}_0,\dots,f^{[n]}_m$ in parallel.
\end{remark}

\subsubsection{\bf Construction of Hermite--Pad\'e polynomials for a~multiindex
$\myk^{[n]}\in\ZZ_{+}^{m+1}$ {\rm(see \eqref{51})} for $n\geq m-1$}\label{s3s2s2} {\ }

\smallskip
{\bf Initial step}. We set
$$
\vec{A}_{m+1}^{[0]}:=(0,0,0,\dots,0,1,a_1^{[0]})\in\CC^{m+1}.
$$

\smallskip
{\bf Step $1$}. We set
\begin{align}
\vec{A}_2^{[1]}:&=(a_m^{[1]},a_m^{[1]}a_m^{[0]},0,0,\dots,0,0,z)\in\CC^{m+1},
\notag\\
&\quad\text{for $j=3,\dots,m+1$ we set }\notag\\
\vec{A}_j^{[1]}:&=(0,\dots,0,
\underbrace{1,a_{m+2-j}^{[1]}+a_{m+3-j}^{[0]},a_{m+2-j}^{[1]}a_{m+2-j}^{[0]}}_{j-2,\,\,j-1,\,\,j},
0,\dots,0)\in\CC^{m+1}.\notag
\end{align}

\smallskip
{\bf Step $(n+1)$, $n\geq m-1$}.
Setting $\ell:=(n-(m-1))\pmod{m+1}\in\{0,\dots,m\}$, we find~$k$ from the relation
$n-(m-1)=(m+1)k+\ell$
(we have $\ell=0,\dots,m$, $k:=(n-(m-1)-\ell)/(m+1)\in\ZZ_{+}$, $k=0,1,\dots$).

Next, we put
\begin{align}
\myk^{[n]}:&
=(\underbrace{k+1,\dots,k+1}_{\ell},\underbrace{k,\dots,k}_{m+1-\ell})\in\ZZ_{+}^{m+1},
\label{51}\\
\vec{A}_2^{[n+1]}:&=a_m^{[n+1]}\vec{A}_2^{[n]}+z\vec{A}_{m+1}^{[n-1]},
\notag\\
\vec{A}_j^{[n+1]}:&=a_{m+2-j}^{[n+1]}\vec{A}_j^{[n]}+\vec{A}_{j-1}^{[n]}, \quad j=3,\dots,m+1\notag.
\end{align}
Now, for all $n\geq m-1$, we have
$$
\vec{A}_{m+1}^{[n]}{\,}^{\mathrm T\!}\!\vec{f}=O(z^{n+1})
$$
and the vector $\vec{Q}_{\myk^{[n]}}(\vec{f}):
=(A_{m+1,m+1}^{[n]},\dots,A_{m+1,1}^{[n]})
=(Q_{\myk^{[n]},0},\dots,Q_{\myk^{[n]},m})$ is the vector of Hermite--Pad\'e polynomials of type~I
for the vector series
${\,}^{\mathrm B}\!\vec{f}:=(f_0,\dots,f_m)$ and the multiindex
$\myk^{[n]}=(k_0,\dots,k_m)$, where $k_j=
\mdeg{A_{m+1,m+1-j}^{[n]}}$, $j=0,\dots,m$, and the order of tangency is
$$
|\myk^{[n]}|+m=\sum_{j=0}^m \mdeg{A}_{m+1,m+1-j}^{[n]}+m=n+1
$$
(the multiindex $\myk^{[n]}$ is uniquely determined from the given number $n\geq m-1$; see~\eqref{51}).


\end{document}